\documentclass[letterpaper, 10 pt, conference]{ieeeconf}  
\usepackage{cite}
\usepackage{amsmath,amssymb,amsfonts}
\usepackage{graphicx}
\usepackage{algorithm}
\usepackage{algpseudocode}
\usepackage{hyperref}
\hypersetup{hidelinks}
\usepackage{textcomp}

\IEEEoverridecommandlockouts                              
\allowdisplaybreaks

\usepackage{nomencl}
\makenomenclature

\newtheorem{theorem}{Theorem}
\newtheorem{lemma}{Lemma}

\newtheorem{corollary}{Corollary}
\newtheorem{assumption}{Assumption}

\newcommand{\ml}{}

\newcommand{\mll}{}

\usepackage{subcaption}

\author{Mengmou~Li,
Yu~Zhou, Xun~Shen, and Masaaki~Nagahara 
\thanks{M.~Li, Y.~Zhou and M.~Nagahara are with the Graduate School of Advanced Science and Engineering, Hiroshima University, Higashi-Hiroshima City, Japan (e-mail: mmli.research@gmail.com; yuzhou@hiroshima-u.ac.jp; nagahara@ieee.org). M.~Li is supported by JSPS KAKENHI under Grant Numbers 24K23864 and 26K17401. This work is also supported by Japan Science and Technology Agency (JST) as part of Adopting Sustainable Partnerships for Innovative Research Ecosystem (ASPIRE), Grant Number JPMJAP2402.}
\thanks{X.~Shen is with the Graduate School of Engineering, Tokyo University of Agriculture and Technology, Tokyo, Japan.}
}

\title{\LARGE \bf A Canonical Structure for Constructing Projected First-Order Algorithms With Delayed Feedback}

\begin{document}

\maketitle
\begin{abstract}
This work introduces a canonical structure for a broad class of unconstrained first-order algorithms that admit a Lur’e representation, including systems with relative degree greater than one, e.g., systems with delayed gradient feedback. The proposed canonical structure is obtained through a simple linear transformation. It enables a direct extension from unconstrained optimization algorithms to set-constrained ones through projection in a Lyapunov-induced norm. The resulting projected algorithms attain the optimal solution while preserving the convergence rates of their unconstrained counterparts.
\end{abstract}


%
\IEEEpeerreviewmaketitle


\section{Introduction}
First-order optimization algorithms play an important role in many fields such as machine learning \cite{gower2020variance,li2020accelerated}, and power systems \cite{molzahn2017survey,li2023distributed}, owing to their simple implementation and scalability to large-scale problems \cite{nesterov2003introductory,boyd2004convex}.
For strongly convex optimization problems, a major research topic is to construct and characterize algorithms with fast linear convergence rates \cite{d2021acceleration}.
In recent years, control-theoretic methods have provided a powerful viewpoint for this purpose by representing optimization algorithms as feedback interconnections. 
In particular, integral quadratic constraints (IQCs) \cite{megretski1997system,seiler2014stability} offer a unified framework for convergence analysis in both frequency and time domains \cite{lessard2016analysis,zhang2022zames}. 
These ideas have also led to convex synthesis methods for accelerated algorithms \cite{scherer2021convex,scherer2023optimization,scherer2025tutorial}, as well as tight analysis of specific methods such as the triple momentum algorithm \cite{van2017fastest}.
Such an application is successful thanks to recent advances of the IQC analysis, including the $\rho$-IQC \cite{lessard2016analysis,boczar2017exponential,hu2016exponential}, the O'Shea--Zames--Falb and Popov multipliers \cite{kulkarni2002all,carrasco2013equivalence,carrasco2016zames,fetzer2017absolute,carrasco2019convex,van2022absolute,su2023necessity,gyotoku2024dual,li2024generalization}.
While IQCs are employed in the above works to characterize nonlinearities, a parallel line of research has analyzed optimization algorithms through interpolation conditions for various classes of nonlinear mappings \cite{taylor2017exact,taylor2017smooth,taylor2017performance}. 

However, although projected and proximal algorithms have also been analyzed or synthesized in the literature \cite{taylor2017exact,lessard2022analysis,li2025convergence,miller2026structure}, there has been little work on deriving constrained algorithms directly from unconstrained ones in a way that preserves their convergence guarantees. A recent exception is \cite{li2025first}, which proposed a framework for constructing projected first-order algorithms from unconstrained ones in Lur'e form while preserving the same linear convergence rate bounds. The key idea is to use projection in the norm induced by a quadratic Lyapunov function of the unconstrained dynamics. This result provides a direct bridge between unconstrained algorithm analysis and constrained algorithm design.
Nevertheless, the framework in \cite{li2025first} does not cover algorithmic systems whose relative degree with respect to the gradient input is greater than one, e.g., systems with delayed gradient feedback or channel dynamics. Such structures arise naturally in various scenarios, such as the asynchronous gradient descent and distributed optimization \cite{zheng2017asynchronous,choi2024non,hatanaka2018passivity,li2020smooth}.
Moreover, the class of admissible multipliers for constructing the stepwise quadratic Lyapunov functions is not addressed in \cite{li2025first}.

This paper extends \cite{li2025first} to first-order algorithms with relative degree more than one. Our starting point is a general Lur'e-type representation of the unconstrained algorithm.
We show that, after a suitable state transformation, such algorithms can be rewritten in a standard form we term the \emph{canonical structure}, which isolates the state directly constrained by the optimization variable and makes the delayed gradient feedback explicit.
Based on this structure, we construct projected algorithms by applying a projection in the Lyapunov-induced norm.
We show that if the original unconstrained algorithm admits a quadratic Lyapunov function certifying a linear convergence rate bound, then the associated projected algorithm retains the same rate bound.

The contributions of this paper are summarized as follows.
\begin{enumerate}
    \item We show that a broad class of first-order algorithms in the Lur'e form, including those with relative degree greater than one with respect to the gradient input, can be transformed into a canonical structure that allows a ready constrained extension.
    \item We introduce a lifted-signal representation for the unconstrained algorithmic dynamics and discuss the associated pointwise multipliers within the IQC framework, leading to stepwise Lyapunov functions.
    \item Based on this canonical structure, we propose projected algorithms using projection in the Lyapunov-induced norm, and prove that they achieve the same linear convergence rate bounds as their unconstrained counterparts.
\end{enumerate}
The significance of the proposed framework is that it reduces the constrained design problem to the unconstrained one: once a linear convergence rate bound has been established for the unconstrained algorithm through a quadratic Lyapunov function, the corresponding projected algorithm can be constructed without changing the original algorithmic parameters.
This is particularly appealing for higher-order and synthesized algorithms, for which direct constrained analysis is often difficult.

The rest of the paper is organized as follows.
In Section~\ref{Preliminaries}, we review some preliminaries from convex analysis. 
In Section~\ref{sec: unconstrained first-order algorithms}, we construct the canonical structure of the first-order algorithms and the IQC analysis.
In Section~\ref{sec: Projected Algorithm and Analysis}, Lyapunov-induced projected algorithms are proposed and proved theoretically with the same rate bounds as the unconstrained ones.
A numerical example is presented in Section~\ref{Numerical Example}.
Finally, the paper is concluded in Section~\ref{Conclusion}.

\section{Preliminaries}\label{Preliminaries}
$\mathbb{N}$, $\mathbb{R}$, $\mathbb{C}$, $\mathbb{R}^{n}$, $\mathbb{R}^{n \times m}$ denote the sets of natural numbers, real numbers, complex numbers, $n$-dimensional real vectors, and $n\times m$ real matrices, respectively.
The $n \times n$ identity matrix, $n \times n$, and $ n \times m$ zero matrices are denoted by $I_{n}$, $0_{n}$, and $0_{n \times m}$, respectively. Subscripts are omitted when they can be inferred from the context.
The diagonal and block diagonal operator is represented by $\operatorname{diag} (\cdot)$, $\operatorname{blkdiag}(\cdot)$, respectively. The $n$-dimensional all-ones vector is denoted by $\mathbf{1}_{n}$.
The Kronecker product is denoted by $\otimes$. 
The range of a matrix $A$ is denoted by $\text{im} (A)$.
The Moore–Penrose inverse of matrix $A$ is denoted by $A^{\dag}$. Given a positive definite matrix $P$, the $P$ weighted norm is given by $\| x\|_{P} = \sqrt{x^\top P x}$.
We use $x_{*}$ to denote a reference point of $x$ or a fixed point of an operator acting on $x$. 

\subsection{Convex analysis}
The convex optimization problem considered in this work is given by 
\begin{align}\label{eq: constrained convex optimization problem}
    & \underset{y \in \Omega \subseteq \mathbb{R}^{d}}{\text{minimize}} ~ f(y)
\end{align}
where the objective function $f:\mathbb{R}^{d} \to \mathbb{R}$ is assumed to be differentiable and convex, the constraint set $\Omega \subseteq \mathbb{R}^{d}$ is a closed and convex. When $\Omega = \mathbb{R}^{d}$, \eqref{eq: constrained convex optimization problem} reduces to an unconstrained problem.

We consider objective function $f \in S(m, L)$, which denotes the class of differentiable, \textit{$m$-strongly convex}, and \textit{$L$-Lipschitz smooth} functions with $L \geq m > 0$, that is, for all $x,~ y \in \mathbb{R}^{d}$,
\begin{align*}
    m \|x - y \|^2_2 \leq \left( \nabla f(x) - \nabla f (y)\right)^{\!\top} \!(x - y) \leq L \| x - y\|^2_2.
\end{align*}

A map $\phi : \mathbb{R}^d \to \mathbb{R}^d$ is said to be \textit{slope-restricted} in $[\alpha,  \beta]$ with $\alpha \leq \beta \in \mathbb{R}$, if $\left( \phi(x) - \phi(y) - \alpha (x - y) \right)^{\!\top} \! \left( \phi(x) - \phi(y) - \beta (x - y) \right) \leq 0$, for all $x, y \in \mathbb{R}^{d}$.
Given $f \in S(m, L)$, $\nabla f$ is slope-restricted in $[m, L]$.

Given a closed and convex set $\Omega$, its \textit{normal cone} at a point $x\in \Omega$ is defined by
$N_{\Omega} (x) = \left\{ v: v^{\top} ( y - x ) \leq 0, ~ \forall  y \in \Omega \right\}$.
\ml{The optimal solution $y_*$ to problem \eqref{eq: constrained convex optimization problem} is given by \cite{ruszczynski2011nonlinear},
\begin{align}\label{eq: optimal solution to the constrained problem}
    - \nabla f(y_*) \in N_{\Omega} (y_*).
\end{align}

The projection of a point $x$ onto a closed convex set $\Omega$ with respect to the norm $\| \cdot \|_{P}$ is defined by 
\begin{align}\label{eq: definition of P norm projection}
    \Pi_{\Omega}^{P} (x) = \underset{y \in \Omega}{\operatorname{argmin}} \| x - y \|_{P}^2.
\end{align}
A projection is \textit{non-expansive} in the associated norm \cite{bauschke2017convex}, i.e.,
\begin{align}\label{eq: nonexpansiveness of projection}
    \left\| \Pi_{\Omega}^{P} (x) - \Pi_{\Omega}^{P} (y) \right\|_{P} \leq  \| x - y\|_{P},~\text{for all~} x, y.
\end{align}
When $P = I$, the projection in \eqref{eq: definition of P norm projection} reduces to the standard projection in the Euclidean norm, denoted by $\Pi_{\Omega} (\cdot)$.
The normal cone $N_{\Omega} (x)$ for $x \in \Omega$ can also be written as
$N_{\Omega} (x) = \left\{ v : \Pi_{\Omega} (x + v) = x\right\}$.}

\section{Unconstrained First-order algorithms and Canonical Structure}\label{sec: unconstrained first-order algorithms}
\subsection{Canonical Structure}
Fixed-stepsize First-order discrete-time algorithms for optimization problem \eqref{eq: constrained convex optimization problem}, including gradient descent and Nesterov's methods, can be reformulated into a Lur'e system, where a discrete-time linear time-invariant (LTI) system is interconnected with a nonlinear operator \cite{lessard2016analysis}. The LTI system is given by
\begin{align}\label{eq: LTI system}
    \xi_{k + 1} =& A \xi_{k} + B u_{k}, ~~ \xi_{0} \in \mathbb{R}^{nd}, \nonumber\\
    y_{k} =& C \xi_{k}
\end{align}
where $A \in \mathbb{R}^{n d \times n d}$,  $B \in \mathbb{R}^{n d \times d}$,  $C\in \mathbb{R}^{ d \times n d}$, and the input $u_{k} \in \mathbb{R}^{d}$ is given by the gradient of the output $y_{k} \in \mathbb{R}^{d}$,
\begin{align}\label{eq: gradient input to u_k}
    u_{k} = \nabla f(y_{k}).
\end{align}
As they are first-order algorithms, all the matrices above admit the so-called dimensionality reduction \cite[Sec.~4.2]{lessard2016analysis}, i.e., they can be rewritten as a Kronecker product form $(\cdot) \otimes I_d$.
The \textit{relative degree} of LTI system \eqref{eq: LTI system} is the smallest $r$ such that $C A^{r - 1} B \neq 0$, that is, the output $y_{k + r}$ explicitly depends on the input $u_k$.
Moreover, according to \cite{scherer2021convex}, system \eqref{eq: LTI system} can be written as
\begin{align}\label{eq: state-space system observable form}
\begin{bmatrix}
\begin{array}{c}
\xi_{k+1} \\ \hline y_{k}
\end{array}
\end{bmatrix}
= 
\begin{bmatrix}
\begin{array}{c}
\xi^{(1)}_{k+1} \\ \xi^{(2)}_{k+1} \\ \hline y_{k}
\end{array}
\end{bmatrix}
= 
    \begin{bmatrix}
    \begin{array}{cc|c}
         I_d & 0 & I_d\\
         \mll{A_{21}} & \mll{A_{22}} & 0\\
         \hline
         C_{1} & C_{2} & 0
    \end{array}
    \end{bmatrix}
    \begin{bmatrix}
        \begin{array}{c}
             \xi^{(1)}_{k} \\ \xi^{(2)}_{k} \\ \hline u_{k}
        \end{array}
    \end{bmatrix}
\end{align}
where $\xi^{(1)}_{k} \in \mathbb{R}^{d}$, $\xi^{(2)}_{k} \in \mathbb{R}^{(n-1)d}$, for some $n > 1$.

Next, we present another form that we call the \textbf{canonical structure} of first-order algorithms for unconstrained optimization, as it will become clear later that this form allows for an easy extension of algorithms to their projected versions.

For system \eqref{eq: state-space system observable form} with relative degree $r$, we can formulate their canonical structure, summarized by the following theorem.
\begin{theorem}\label{eq: canonical form of LTI system}
System \eqref{eq: LTI system} with relative degree $r \geq 1$ can be rewritten as
\begin{align}\label{eq: canonical structure}
\begin{bmatrix}
\begin{array}{c}
\tilde{\xi}^{(1)}_{k+1} \\ \tilde{\xi}^{(2)}_{k+1} \\ \hline y_{k}
\end{array}
\end{bmatrix}
= &
\begin{bmatrix}
\begin{array}{c}
y_{k + r} \\ \vdots \\ y_{k + 1} \\ \tilde{\xi}^{(2)}_{k+1} \\ \hline y_{k}
\end{array}
\end{bmatrix}
= 
\begin{bmatrix}
    \begin{array}{c|c}
    \tilde{A} & \tilde{B} \\
         \hline
         \tilde{C} & 0
    \end{array}
\end{bmatrix}
    \begin{bmatrix}
        \begin{array}{c}
        y_{ k+r -1 } \\ \vdots \\ y_{k} \\ \tilde{\xi}^{(2)}_{k} \\ \hline u_{k}
        \end{array}
    \end{bmatrix} \nonumber\\
= &
    \begin{bmatrix}
    \begin{array}{cc|c}
         \tilde{A}_{11} & \tilde{A}_{12} & \tilde{B}_1 \\
         \tilde{A}_{21} & \tilde{A}_{22} & 0\\
         \hline
         \tilde{C}_1 & 0 & 0
    \end{array}
    \end{bmatrix}
    \begin{bmatrix}
        \begin{array}{c}
         y_{k + r - 1} \\ \vdots \\ y_{k} \\ \tilde{\xi}^{(2)}_{k} \\ \hline u_{k}
        \end{array}
    \end{bmatrix}
\end{align}
where $r \geq 1$ denotes the relative degree of system \eqref{eq: LTI system}, 
\begin{align*}
& \tilde A_{11}=
\begin{bmatrix}
\star & \star & \cdots & \star\\
I_d & 0 & \cdots & 0\\
\vdots & \ddots & \ddots & \vdots\\
0 & \cdots & I_d & 0
\end{bmatrix}\in\mathbb{R}^{rd\times rd},\\
& \tilde A_{12} = 
\begin{bmatrix}
    \star \\ 0 \\ \vdots \\ 0 
\end{bmatrix} \in \mathbb{R}^{rd \times (n - r)d}, ~
\tilde{B}_1 = e_{1} \otimes CA^{r - 1} B \in \mathbb{R}^{rd \times d}\\
& \tilde{C}_1 = 
\begin{bmatrix}
    0 & \cdots & 0  & I_{d}
\end{bmatrix} \in \mathbb{R}^{ r d}
\end{align*}
and $e_1 \in \mathbb{R}^{r}$ is the first standard basis vector.
\end{theorem}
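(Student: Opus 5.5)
The plan is to build an explicit similarity transformation $\tilde{\xi}_k = T\xi_k$ whose first $rd$ coordinates are the stacked outputs $y_{k+r-1},\dots,y_k$ and whose remaining coordinates complete a basis. Throughout, all matrices are of the form $(\cdot)\otimes I_d$, so it suffices to argue with scalar blocks ($d=1$) and tensor at the end.

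\emph{Step 1: top block of $T$.} Since the relative degree is $r$, we have $CA^{j}B=0$ for $j=0,\dots,r-2$. Iterating \eqref{eq: LTI system} gives $y_{k+j}=CA^{j}\xi_k$ for $j=0,\dots,r-1$, with no input terms. So I set
\begin{align*}
T_1=\begin{bmatrix} CA^{r-1}\\ \vdots \\ CA \\ C\end{bmatrix}\in\mathbb{R}^{rd\times nd}.
\end{align*}
Its rows are linearly independent by a standard argument: if $\sum_{j=0}^{r-1}\lambda_j CA^j=0$ with $j_0$ the smallest index having $\lambda_{j_0}\neq 0$, multiply on the right by $A^{r-1-j_0}B$. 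The terms with $j>j_0$ give $CA^{j+r-1-j_0}B$ with exponent at least $r$, and these need not vanish, so this argument is not immediate. I would instead take the largest index $j_1$ with $\lambda_{j_1}\neq 0$ and multiply by $A^{r-1-j_1}B$. Every other term then has exponent $j+r-1-j_1\le r-2$ and vanishes, leaving $\lambda_{j_1}CA^{r-1}B=0$, a contradiction since $CA^{r-1}B\neq 0$.

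\emph{Step 2: complement $T_2$.} I would choose $T_2$ with rows spanning a complement of the row space of $T_1$ such that $T_2B=0$. This is possible because $T_1B=e_1\otimes CA^{r-1}B$, so $B$ is not in the kernel of the first row block. Concretely, $\ker(B^\top)$ has dimension $(n-1)d$, and the rows of $T_1$ other than the first span a subspace of it of dimension $(r-1)d$. I extend these rows to a basis of $\ker(B^\top)$ by $(n-r)d$ further rows, which form $T_2$. Then $T=[T_1;T_2]$ is invertible, since the first row block is not in $\ker(B^\top)$. One caveat: for $d>1$ the row space of $CA^{r-1}$ may only partially lie outside $\ker(B^\top)$. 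With the Kronecker structure, however, $CA^{r-1}B$ is a nonzero scalar times $I_d$, so this works.

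\emph{Step 3: read off the blocks.} With $\tilde A=TAT^{-1}$, $\tilde B=TB$ and $\tilde C=CT^{-1}$:
\begin{enumerate}
\item $\tilde B=[e_1\otimes CA^{r-1}B;\,0]$ holds by construction.
\item $\tilde C=[0\ \cdots\ 0\ I_d\ 0]$ holds because $C$ equals the last row block of $T_1$.
\item Rows $2,\dots,r$ of $\tilde A$ are shift identities, since $CA^{j}A\,T^{-1}$ equals the row selecting $CA^{j+1}\xi$.
\item The first row of $\tilde A$ is $CA^{r}T^{-1}$, which is arbitrary and gives the $\star$ entries.
\item $\tilde A_{21}$ and $\tilde A_{22}$ are arbitrary.
\end{enumerate}
This also confirms that the state dynamics are exactly $y_{k+j+1}$ shifting, and that $y_{k+r}=CA^r\xi_k+CA^{r-1}Bu_k$.

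The main obstacle is Step 2, namely guaranteeing that the complement coordinates receive no input ($\tilde B_2=0$) while $T$ remains invertible. The rank argument via $\ker(B^\top)$ together with the Kronecker structure is what makes this go through.
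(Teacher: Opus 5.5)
Your proposal is correct, and its skeleton matches the paper's main branch. Both use a similarity transformation $T=[Q_r;\,S]$ whose top block stacks $CA^{r-1},\dots,CA,C$ and whose complement annihilates $B$. The paper's $S=[0\ \ S_2]$, taken in coordinates where $B=[I_d;\,0]$, is a coordinate version of your rows in the left annihilator of $B$.

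The genuine difference is your Step~1. You take the largest index with a nonzero coefficient and use $CA^{j}B=0$ for $j\le r-2$ together with the nonsingularity of $CA^{r-1}B=gI_d$. This proves that $\operatorname{rank}(Q_r)=rd$ always holds. The paper instead splits into cases. For $\operatorname{rank}(Q_r)<rd$ it falls back on a non-minimal augmentation $(y_{k+r-1},\mathbf{y}_k,\bar{\xi}^{(2)}_k)$, whose last block has dimension $(n-1)d$ rather than the stated $(n-r)d$. Your lemma shows that branch is vacuous, so a single similarity transformation with exactly the claimed block sizes always exists.

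Your Step~2 also supplies precision the paper omits. Requiring only that $S_2$ be full rank does not make $[Q_r;\,S]$ invertible. The rows of $S_2$ must be complementary to the span of $CA^{r-2},\dots,C$ restricted to the last $(n-1)d$ coordinates. Your basis extension inside the left annihilator of $B$, combined with the nonsingularity of $CA^{r-1}B$, settles invertibility explicitly. What the paper's route buys in return is brevity: starting from the normalized form with $B=[I_d;\,0]$ makes $SB=0$ automatic.
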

\begin{proof}
Let us start with the form \eqref{eq: state-space system observable form}. For a system with relative degree $r \geq 1$, we have that $CA^{r - 1} B \neq 0$. As the system matrices admit a Kronecker structure, and $B = \begin{bmatrix}
    I_d \\ 0
\end{bmatrix}$ in \eqref{eq: state-space system observable form}, then the first block of $CA^{r - 1} $, i.e.,
\[
CA^{r - 1} B := g  I_d
\]
is nonsingular. 
Next, define
\begin{align*}
    Q_r = 
    \begin{bmatrix}
        C A^{r - 1} \\ \vdots \\ CA \\ C
    \end{bmatrix}
    \in \mathbb{R}^{rd \times nd}.
\end{align*}
If $\operatorname{rank} (Q_r) = rd$, we can apply an invertible linear transformation
\begin{align*}
    \tilde{\xi}_k = T \xi_k, \quad T = 
    \begin{bmatrix}
        Q_r \\ S
    \end{bmatrix}
\end{align*}
where $S = \begin{bmatrix}
    0 & S_2
\end{bmatrix} \in \mathbb{R}^{(n - r)d \times nd}$ with a full-rank matrix $S_2 \in \mathbb{R}^{(n - r) d \times ( n - 1)d}$, such that \eqref{eq: canonical structure} is derived.

If $\operatorname{rank} (Q_r) < rd$, we can apply an invertible linear transformation given by
\begin{align*}
   \bar{\xi}_{k} = \begin{bmatrix}
       C A^{r - 1}\\
       \begin{bmatrix}
           0 & I_{(n-1) d \times (n-1)d}
       \end{bmatrix}
   \end{bmatrix}
   \xi_{k}.
\end{align*}
The system is transformed to
\begin{align*}
& \begin{bmatrix}
\begin{array}{c}
y_{k+r} \\ \bar{\xi}^{(2)}_{k+1} \\ \hline y_{k}
\end{array}
\end{bmatrix}
=
    \begin{bmatrix}
    \begin{array}{cc|c}
          \bar{A}_{11} & \bar{A}_{12} &  \bar{B}_1 \\
         \bar{A}_{21} & \bar{A}_{22} & 0\\
         \hline
         \bar{C}_1 & \bar{C}_2 & 0
    \end{array}
    \end{bmatrix}
    \begin{bmatrix}
        \begin{array}{c}
        y_{k+r -1} \\ \bar{\xi}^{(2)}_{k} \\ \hline u_{k}
        \end{array}
    \end{bmatrix}
\end{align*}
where
$\bar{B}_1 = C A^{r - 1} B$,
and $\bar{\xi}^{(1)}_{k} = C A^{r- 1} \xi_k  = y_{k + r -1}$, for $r \geq 1$.
Next, we can augment the state by
\begin{align*}
\tilde{\xi}_k : = \begin{bmatrix}
   y_{k+r- 1} \\ \mathbf{y}_k \\ \bar{\xi}^{(2)}_k
\end{bmatrix}, \quad 
    \mathbf{y}_k := 
    \begin{bmatrix}
       y_{k+r - 2} \\ \vdots \\ y_{k}
    \end{bmatrix}
\end{align*}
and rewrite the output $y_k = \tilde{C}_{1} 
\begin{bmatrix}
    y_{k+r-1} \\ \mathbf{y}_k
\end{bmatrix}
$
which leads to \eqref{eq: canonical structure}.
\end{proof}
Systems in the form of \eqref{eq: canonical structure} allow a direct extension to projected algorithms, which will be shown later.

When the relative degree is one, that is, there is no delay in the input-output channel, we can obtain the following form.
\begin{corollary}[\hspace{1sp}\cite{li2025first}]
System \eqref{eq: LTI system} and \eqref{eq: gradient input to u_k} with relative degree one can be rewritten as
\begin{align}\label{eq: canonical structure relative degree one}
\begin{bmatrix}
\begin{array}{c}
y_{k+1} \\ \tilde{\xi}^{(2)}_{k+1} \\ \hline y_{k}
\end{array}
\end{bmatrix}
= &
    \begin{bmatrix}
    \begin{array}{cc|c}
         \tilde{A}_{11} & \tilde{A}_{12} & C_1 \\
         \tilde{A}_{21} & \tilde{A}_{22} & 0\\
         \hline
         I_d & 0 & 0
    \end{array}
    \end{bmatrix}
    \begin{bmatrix}
        \begin{array}{c}
        y_{k} \\ \tilde{\xi}^{(2)}_{k} \\ \hline u_{k}
        \end{array}
    \end{bmatrix}
\end{align}
where $C_1 = CB = c_1 \otimes I_d \prec 0$, as defined in \eqref{eq: state-space system observable form}.
\end{corollary}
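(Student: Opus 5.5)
The corollary is the special case $r=1$ of Theorem~\ref{eq: canonical form of LTI system}. The plan is to specialize that construction and then check the two claims that are particular to this case: the input block equals $C_1=CB$, and it is negative definite.

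\textbf{Step 1: the canonical form at $r=1$.} Start from the observable form \eqref{eq: state-space system observable form}. There $B=\begin{bmatrix} I_d & 0\end{bmatrix}^\top$, so $CB = C_1$, where $C_1$ is the first block of $C$. Relative degree one means $CB\neq 0$. Because of the Kronecker structure, $C_1 = c_1\otimes I_d$ with $c_1\neq 0$, so $C_1$ is nonsingular.

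\textbf{Step 2: the state transformation.} Take $Q_1 = C = \begin{bmatrix} C_1 & C_2\end{bmatrix}$, which has rank $d$ automatically. Use
\[
T=\begin{bmatrix} C_1 & C_2\\ 0 & I_{(n-1)d}\end{bmatrix},
\]
which is invertible because $C_1$ is. With $\tilde\xi_k = T\xi_k$, the first new coordinate is $C\xi_k=y_k$, so the output map becomes $\begin{bmatrix} I_d & 0\end{bmatrix}$. The new input matrix is $TB=\begin{bmatrix} C_1 \\ 0\end{bmatrix}$, so the second block row receives no input. This gives \eqref{eq: canonical structure relative degree one}, with $\tilde A = TAT^{-1}$ partitioned accordingly. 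Equivalently, we read off $\tilde B_1 = e_1\otimes CA^{0}B = CB$ from Theorem~\ref{eq: canonical form of LTI system}, and $\tilde A_{11}$ has no shift rows.

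\textbf{Step 3: the sign condition $c_1<0$.} This is the only genuine obstacle, because it does not follow from the algebra alone. The nonzero coefficient $c_1$ could in principle have either sign. I would argue from the requirement that the algorithm actually solves the problem, i.e., it converges to $y_*$ with $\nabla f(y_*)=0$ for every $f\in S(m,L)$.

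Take $d=1$ and the quadratic $f(y)=\tfrac{\epsilon}{2}y^2$. The closed-loop matrix is then $A+\epsilon BC$. Since $\nabla f(y_*)=0$ must be a fixed point for every such $f$, the unforced system has the integrator structure of \eqref{eq: state-space system observable form}, with an eigenvalue at $1$. By the standard perturbation of this simple eigenvalue, its shift is proportional to $\epsilon\, c_1$ times a positive factor coming from the integrator. Stability for small $\epsilon>0$ therefore requires $c_1<0$. This is precisely the descent-direction condition, as in gradient descent, where $y_{k+1}=y_k-\alpha u_k$ and $CB=-\alpha$.

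Alternatively, if allowed, I would simply cite \cite{li2025first}, where $C_1\prec 0$ is established, or take it as part of the standing assumption that the algorithm is convergent.
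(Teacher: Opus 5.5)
Your Steps 1--2 follow the paper's route. The corollary is the $r=1$ instance of Theorem~\ref{eq: canonical form of LTI system} with $Q_1=C$ and $S_2=I$, which gives $\tilde B_1=e_1\otimes CB=C_1$ and $\tilde C_1=I_d$, and your explicit $T$ is correct.

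The gap is Step~3, and the eigenvalue computation there is wrong. Take $d=1$ and $1\notin\sigma(A_{22})$. The unit eigenvalue of $A$ in \eqref{eq: state-space system observable form} has right eigenvector $v=(1,(I-A_{22})^{-1}A_{21})$ and left eigenvector $w=(1,0)$. Its derivative along $A+\epsilon BC$ is therefore $w^\top BCv/(w^\top v)=Cv=C_1+C_2(I-A_{22})^{-1}A_{21}$. This is the steady-state gain $\lim_{z\to 1}(z-1)C(zI-A)^{-1}B$, not $CB=C_1$. The integrator accumulates all past gradient contributions, and near the fixed point only their net weight matters. The two quantities coincide only when $C_2(I-A_{22})^{-1}A_{21}=0$, as in plain gradient descent. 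In addition, $\epsilon$ cannot be taken small: $\tfrac{\epsilon}{2}y^2\in S(m,L)$ forces $\epsilon\ge m>0$, and nothing is assumed about behaviour outside that class.

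More importantly, no argument from convergence alone can give $c_1<0$. Consider the update $y_{k+1}=y_k+0.001\,\nabla f(y_k)-0.005\,\nabla f(y_{k-1})$. It is realized in the form \eqref{eq: state-space system observable form} with $A_{21}=1$, $A_{22}=0$, $C=[\,0.001,\ -0.005\,]$. This realization is observable and has relative degree one, with $CB=0.001>0$ and $Cv=-0.004$. For $f\in S(1,10)$ and $e_k=\|y_k-y_*\|$, write the update as a gradient step of size $0.004$ (contraction factor $0.996$) plus the perturbation $0.005(\nabla f(y_k)-\nabla f(y_{k-1}))$. Using $\|y_k-y_{k-1}\|\le 0.01\,e_{k-1}+0.05\,e_{k-2}$, this gives \[ e_{k+1}\le 0.996\,e_k+0.0005\,e_{k-1}+0.0025\,e_{k-2}. \] The coefficients sum to $0.999<1$, so the method converges linearly on all of $S(1,10)$ even though $C_1\succ 0$.

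So the sign condition is an extra hypothesis, not a consequence of the setting. Your fallback is the only sound option: state $C_1\prec 0$ as an assumption, or import it from \cite{li2025first}, rather than claim to derive it. The paper does not derive it either. It quotes the corollary from \cite{li2025first}, and Lemma~\ref{lem: CAB negative} rests only on the descent-direction remark, which this example shows is not automatic.
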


Now that the unconstrained algorithm can be written as 
\begin{align}\label{eq: unconstrained algorithm form}
    \begin{bmatrix}
        \tilde{\xi}_{k + 1}^{(1)} \\ \tilde{\xi}_{k + 1}^{(2)}
    \end{bmatrix}
    =
    \begin{bmatrix}
        \tilde{A}_{11} & \tilde{A}_{12} \\
        \tilde{A}_{21} & \tilde{A}_{22} \\
    \end{bmatrix}
    \begin{bmatrix}
        \tilde{\xi}_{ k }^{(1)} \\ \tilde{\xi}_{ k }^{(2)}
    \end{bmatrix}
    + \tilde{B}
    \nabla f( \tilde{C} \tilde{ \xi} )
\end{align}
where $\tilde{\xi}_{k}^{(1)} = (y_{k + r - 1}, \mathbf{y}_{k} )= (y_{k + r - 1}, \ldots, y_k)$.

It has been shown in \cite{scherer2021convex} that $\tilde{A} \xi_* = \xi_*$ yields a unique solution if given a fixed $y_*$. We will show in the following lemma that more properties can hold for the system matrix $\tilde{A}$ in \eqref{eq: unconstrained algorithm form}. 
\begin{lemma}\label{lem: properties of A}
Consider $\tilde{A}$ in \eqref{eq: unconstrained algorithm form} and denote 
    \begin{align*}
    K = & \begin{bmatrix} K_1 & K_2 \end{bmatrix} := \tilde{A} - I, 
    ~ K_1 \in \mathbb{R}^{nd \times r d}, ~ 
    K_2 \in \mathbb{R}^{nd \times (n-r) d}.
    \end{align*}
    Then,
    \begin{align}\label{eq: identity condition for A}
        \left( I - K_{2} \left( K_2^{\top} K_2 \right)^{-1} K_2^\top \right) K_1 = 0
    \end{align} or equivalently,
    \begin{align}
    \begin{aligned}
    \tilde{A}_{11} - \tilde{A}_{12} \left( K_2^{\top} K_2 \right)^{-1} K_2^{\top} K_1  = I\\
    \tilde{A}_{21} - (\tilde{A}_{22} - I) \left( K_2^{\top} K_2 \right)^{-1} K_2^\top K_1 = 0.
    \end{aligned}
    \end{align}
\end{lemma}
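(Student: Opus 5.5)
The plan is to read \eqref{eq: identity condition for A} as a statement about ranges. The matrix $\Pi := K_2(K_2^\top K_2)^{-1}K_2^\top$ is well defined exactly when $K_2$ has full column rank $(n-r)d$. In that case $\Pi$ is the orthogonal projector onto $\operatorname{im}(K_2)$, and $(I-\Pi)K_1=0$ is equivalent to $\operatorname{im}(K_1)\subseteq\operatorname{im}(K_2)$. The proof therefore splits into two claims: (a) $K_2$ is injective, and (b) every column of $K_1$ lies in $\operatorname{im}(K_2)$. The block identities then follow mechanically.

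\emph{Step 1 (injectivity of $K_2$).} I will use the fixed-point property quoted from \cite{scherer2021convex}: for a fixed $y_*$, the equation $\tilde A\xi_*=\xi_*$ has a unique solution. By Theorem~\ref{eq: canonical form of LTI system}, the first block $\tilde\xi^{(1)}$ is pinned down by the output, so the equation reads $K_1\tilde\xi^{(1)}_*+K_2\tilde\xi^{(2)}_*=0$. If $K_2 v=0$ with $v\neq 0$, then $(\tilde\xi^{(1)}_*,\tilde\xi^{(2)}_*+v)$ is a second fixed point with the same output, which contradicts uniqueness. Hence $K_2^\top K_2\succ 0$, and $\Pi$ is a well-defined orthogonal projector.

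\emph{Step 2 (range inclusion).} The same fixed-point property also gives existence: for each admissible value of the first block there is some $\tilde\xi^{(2)}_*$ with $K_2\tilde\xi^{(2)}_*=-K_1\tilde\xi^{(1)}_*$. This places $K_1\tilde\xi^{(1)}_*$ in $\operatorname{im}(K_2)$ for every such first-block value. If these values span $\mathbb{R}^{rd}$, then $\operatorname{im}(K_1)\subseteq\operatorname{im}(K_2)$, so $(I-\Pi)K_1=0$. In that case the unique solution is $\tilde\xi^{(2)}_*=-(K_2^\top K_2)^{-1}K_2^\top K_1\tilde\xi^{(1)}_*$, which is the formula the lemma encodes.

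\emph{Step 3 (block form).} Write $M:=(K_2^\top K_2)^{-1}K_2^\top K_1$, so that $\Pi K_1=K_2M$, and split rows according to \eqref{eq: unconstrained algorithm form}. The top block of $K_1$ is $\tilde A_{11}-I$ and the top block of $K_2$ is $\tilde A_{12}$. The bottom blocks are $\tilde A_{21}$ and $\tilde A_{22}-I$. So $K_1=K_2M$ splits into $\tilde A_{11}-I=\tilde A_{12}M$ and $\tilde A_{21}=(\tilde A_{22}-I)M$, which are exactly the two displayed equations. The argument runs in both directions, which gives the claimed equivalence.

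\emph{Main obstacle.} The hard step is Step~2, specifically the claim that admissible first-block values span all of $\mathbb{R}^{rd}$. For $r=1$ this is immediate: $\tilde\xi^{(1)}_*=y_*$ is arbitrary in $\mathbb{R}^d$, and the statement reduces to \cite{li2025first}. For $r>1$, however, a fixed point of the shift structure in $\tilde A_{11}$ forces $\tilde\xi^{(1)}_*=\mathbf 1_r\otimes y_*$. The rows of $\tilde A_{11}-I$ below the first are $[\cdots\, I_d\; -I_d\,\cdots]$, while the corresponding rows of $\tilde A_{12}$ vanish. So the top-block identity can only hold in the form $(\tilde A_{11}-I)(\mathbf 1_r\otimes I_d)=\tilde A_{12}M'$.

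I would first try to close this gap by checking whether the paper intends $K_1$ to act only on the consistent subspace $\{\mathbf 1_r\otimes y\}$. If not, the literal identity must be verified directly on these lower rows. I expect this verification to fail for $r>1$, so I regard this as the point where the statement may need amending rather than a routine step.
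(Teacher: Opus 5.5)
Your Steps 1 and 3 follow the paper's own argument. The paper gets uniqueness of the fixed point for a given $y_*$ from the PBH full-column-rank condition on $\begin{bmatrix}\tilde A-I\\ \tilde C\end{bmatrix}$ (detectability). It then infers that $K_2$ has full column rank, writes $\xi^{(2)}_*=-K_2^{\dag}K_1\xi^{(1)}_*$, and substitutes back. The problem you flag in Step 2 is real, and it is exactly where the paper's proof breaks. The paper derives $(I-K_2K_2^{\dag})K_1\xi^{(1)}_*=0$ for all $\xi^{(1)}_*=\mathbf{1}_r\otimes y_*$ and concludes $(I-K_2K_2^{\dag})K_1=0$. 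That only yields $(I-K_2K_2^{\dag})K_1(\mathbf{1}_r\otimes I_d)=0$, which is the claimed identity only when $r=1$.

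You can turn ``I expect this verification to fail'' into a proof, using the row observation you already made. For $r\ge 2$, take $w=e_2\otimes v\in\mathbb{R}^{nd}$ with $e_2\in\mathbb{R}^n$ and $v\neq 0$. The second block row of $K_2$ comes from $\tilde A_{12}$ and vanishes, so $K_2^\top w=0$. Hence your $\Pi$ satisfies $\Pi w=0$, and
\[
w^\top (I-\Pi)K_1 = w^\top K_1 = v^\top\begin{bmatrix} I_d & -I_d & 0 & \cdots & 0\end{bmatrix}\neq 0 .
\]
Equivalently, the second block row of $\tilde A_{11}-\tilde A_{12}M$ is $\begin{bmatrix} I_d & 0 & \cdots & 0\end{bmatrix}$ for every $M$, never the second block row of $I$. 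The $r=2$ example of Section~\ref{Numerical Example}, where the second row of $\tilde A-I$ is $\begin{bmatrix}1 & -1 & 0 & 0\end{bmatrix}$, is a concrete counterexample.

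So the lemma holds as stated only for $r=1$. For $r\ge 2$, what your argument (and the paper's) actually proves is the restricted identity $(I-\Pi)K_1(\mathbf{1}_r\otimes I_d)=0$. With $M'=K_2^{\dag}K_1(\mathbf{1}_r\otimes I_d)$, its block form is $\tilde A_{11}(\mathbf{1}_r\otimes I_d)-\tilde A_{12}M'=\mathbf{1}_r\otimes I_d$ and $\tilde A_{21}(\mathbf{1}_r\otimes I_d)=(\tilde A_{22}-I)M'$. This weaker version is all that Theorem~\ref{thm: fixed point gives optimum} uses. The reason is that at any fixed point of \eqref{eq: projected algorithm clean}, the shift rows force $\xi^{(1)}_*=\mathbf{1}_r\otimes y_*$.
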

\begin{proof}
    It has been shown that $(\tilde{A}, \tilde{C})$ is detectable without loss of generality, and $A$ has an eigenvalue at one \cite{scherer2021convex}. Then, by \cite[Theorem 3.4]{zhou1996robust}, we have the following full column rank condition,
    \[\operatorname{rank} \left( \begin{bmatrix}
        \tilde{A} - I \\ \tilde{C}
    \end{bmatrix}\right) = nd
    \]
    and thus the linear equality
    \[
        \begin{bmatrix}
            \tilde{A} - I \\ \tilde{C}
        \end{bmatrix}
        \tilde{\xi}_{*} = 
        \begin{bmatrix}
            0 \\ y_{{*}}
        \end{bmatrix}
    \]
    yields a unique solution for $\tilde{\xi}_{*}$.
    As $\tilde{C} = \begin{bmatrix}
        0_{ d \times (r - 1)d } & I_d & 0
    \end{bmatrix}$, 
    given a fixed value of $y_{*}$, we have $\xi^{(1)}_{*} = \mathbf{1}_{r} \otimes y_{*}$, and $\xi^{(2)}_{*}$ is obtained by the unique solution to 
    \begin{align}\label{eq: unique solution unconstrained case}
    \begin{aligned}
    \begin{bmatrix}
        \tilde{A}_{11} - I & \tilde{A}_{12}\\
        \tilde{A}_{21} & \tilde{A}_{22} - I
    \end{bmatrix}
    \begin{bmatrix}
        \xi^{(1)}_{*} \\ \xi^{(2)}_{*}
    \end{bmatrix}
    = K_1  \xi^{(1)}_{*} + K_2 \xi^{(2)}_{*} = 0
    \end{aligned}
    \end{align}
    which gives 
    \begin{align}\label{eq: xi^2 for unconstrained case}
    \xi^{(2)}_{*} = - K_2^{\dag} K_1 \xi^{(1)}_{*} = - \left( K_2^{\top} K_2 \right)^{-1} K_2^\top K_1 \xi^{(1)}_{*}
    \end{align}
    where $K_2^{\dag}= \left( K_2^{\top} K_2 \right)^{-1} K_2^\top$ is the pseudoinverse of $K_2$, and $K_2$ is of full column rank by \eqref{eq: unique solution unconstrained case}, that is, $\operatorname{rank} (K_2) = (n -r)d$.
    Substituting $\xi^{(2)}_{*}$ back to \eqref{eq: unique solution unconstrained case}, we obtain $\left( I - K_{2} \left( K_2^{\top} K_2 \right)^{-1} K_2^\top \right) K_1 \xi^{(1)}_* = 0$. Since $\xi^{(1)}_* = \mathbf{1}_{r} \otimes y_*$ with arbitrary $y_* \in \mathbb{R}^{d}$, we have that \eqref{lem: properties of A} holds, which completes the proof.
\end{proof}

Since the algorithm is designed to solve strongly convex optimization problems for all $f \in S(m, L)$, the first gradient input on the optimization variable must enter with a descent direction. In the canonical structure \eqref{eq: canonical structure}, this action is characterized by $\tilde{C} \tilde{B} = \tilde{B}_1 = C A^{r-1} B = g I_d$. Therefore, the scalar coefficient $g$ must satisfy $g < 0$.
\begin{lemma}\label{lem: CAB negative}
The matrix $\tilde{B}_1 = CA^{r - 1} B = g  I_d$ in system \eqref{eq: canonical structure} satisfies $g < 0$.
\end{lemma}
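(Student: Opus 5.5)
We argue by contradiction. The algorithm is assumed to converge linearly to the minimizer for every $f \in S(m,L)$ and every initial condition, so it suffices to exhibit one admissible function for which $g \geq 0$ forces the iterates away from the optimum. Since $CA^{r-1}B = gI_d$ is nonsingular by the proof of Theorem~\ref{eq: canonical form of LTI system}, we already have $g \neq 0$. The task is therefore to rule out $g > 0$.

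The first step uses the dimensionality reduction. All matrices have the form $(\cdot)\otimes I_d$, so it suffices to take $d=1$ and the quadratic $f(y) = \frac{h}{2}y^2$ with $h \in [m,L]$. Then $u_k = h y_k$ and the loop is the LTI system $\tilde\xi_{k+1} = (\tilde A + h\tilde B\tilde C)\tilde\xi_k$. Its transfer function from $u$ to $y$ is $G(z) = \tilde C(zI-\tilde A)^{-1}\tilde B$. Convergence for all $h\in[m,L]$ means that the closed-loop polynomial $p_h(z) = \det(zI-\tilde A) - h\,N(z)$ is Schur stable for every such $h$, where $N(z)$ is the numerator of $G$. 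In the canonical structure \eqref{eq: canonical structure}, $G(z) = g z^{-r} + O(z^{-r-1})$, so $N$ has leading term $g z^{n-r}$.

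The second step uses the eigenvalue of $\tilde A$ at $1$ (Lemma~\ref{lem: properties of A} and \cite{scherer2021convex}), so $\det(I-\tilde A) = 0$ and hence $p_h(1) = -hN(1)$. Schur stability of a monic real polynomial forces $p_h(1) > 0$, because the polynomial has no real roots in $[1,\infty)$ and is positive for large real $z$. This gives $N(1) < 0$, i.e.\ the DC gain relation $\lim_{z\to1}(z-1)G(z) < 0$. Thus the integrator carries negative gain, matching the $I_d$ integrator row in \eqref{eq: state-space system observable form}.

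The third step, which is the main obstacle, is to connect the sign of $N(1)$ to the sign of the leading coefficient $g$. The DC gain does not determine the high-frequency coefficient in general. My first attempt would use a small-gain limit. Let $h\to 0^+$ (extending the continuity argument down toward $h$ small, or rescaling $m$), track the root of $p_h$ that leaves $z=1$, and note that it moves as $z \approx 1 + hN(1)/\det{}'(1)$. Stability requires this root to move inward. That yields a sign condition, but again at DC. To reach $g$ instead, I would use the large-$h$ direction within $[m,L]$, or more concretely the first gradient effect on the optimization variable in the canonical structure. From \eqref{eq: canonical structure}, $y_{k+r} = \star + g\nabla f(y_k)$. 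Take an initial state at the unconstrained optimum $\tilde\xi_* = (\mathbf 1_r\otimes y_*, \xi^{(2)}_*)$ and perturb only the component $y_k$ by $\delta$. The resulting change in $y_{k+r}$, relative to the fixed point, is $g h\delta$ plus terms independent of $h$. Letting $h$ range over $[m,L]$, stability uniformly in $h$ (after normalizing the problem class by scaling, which $S(m,L)$ is invariant to up to ratio) requires the $h$-dependent correction to oppose $\delta$, giving $g<0$.

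If this perturbation argument proves too loose, the fallback is a root-locus argument. As $h$ increases, the closed-loop roots with $p_h$ monic approach the zeros of $N$, and the remaining roots follow asymptotes whose direction is determined by the sign of $g$. A positive $g$ sends a real branch along the positive real axis from the integrator pole at $1$, which contradicts stability at $h=m$ once the problem is rescaled so that $m$ is small relative to the pole–zero geometry.
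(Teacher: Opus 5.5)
\paragraph{There is a genuine gap, and it is in Step 3.}
Your Steps 1 and 2 are correct, and Step 2 correctly shows that convergence forces $N(1)<0$, i.e.\ the integrator gain $\lim_{z\to1}(z-1)G(z)$ is negative. Step 3, however, cannot be closed by any argument. Convergence on $S(m,L)$, even together with Assumption~\ref{assumption 1}, does not determine the sign of the leading Markov parameter $g$.

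Take $d=1$, $m=1$, $L=10$ and, in the form \eqref{eq: state-space system observable form},
\[
A=\begin{bmatrix}1&0\\ 1&-0.5\end{bmatrix},\quad
B=\begin{bmatrix}1\\ 0\end{bmatrix},\quad
C=\begin{bmatrix}0.05&-0.125\end{bmatrix}.
\]
\begin{itemize}
\item Then $G(z)=\frac{0.05z-0.1}{(z-1)(z+0.5)}$, so $r=1$, $g=CB=0.05>0$ and $N(1)=-0.05<0$. Also $(A,C)$ is observable, and fixed points satisfy $\nabla f(y_*)=0$.
\item The loop polynomial $p_h(z)=z^2-(0.5+0.05h)z+(0.1h-0.5)$ has $p_h(1)=0.05h>0$, $p_h(-1)=1+0.15h>0$ and $|0.1h-0.5|\le 0.5$. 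It is therefore Schur for every $h\in[1,10]$, so your whole quadratic family is stable with $g>0$.
\item The example in fact converges on all of $S(1,10)$. In the canonical coordinates $e_k=(y_k-y_*,\,0.125(\xi^{(2)}_k-\xi^{(2)}_*))$ one gets $e_{k+1}=M_{h_k}e_k$, with $M_h=\begin{bmatrix}0.05h-1.5&-1\\ 2.5&2\end{bmatrix}$ and secant slopes $h_k\in[1,10]$.
\item The matrix $P=\begin{bmatrix}3&1.6\\ 1.6&1\end{bmatrix}$ satisfies $P-M_h^\top PM_h\succ0$ at $h=1$ and $h=10$. Since $h\mapsto M_h^\top PM_h$ is convex, this holds on all of $[1,10]$.
\item By the S-lemma, this is a feasible instance of \eqref{eq: LMI from IQC} with a static sector multiplier. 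So Assumption~\ref{assumption 1} does not exclude the example either.
\end{itemize}

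\paragraph{Why each Step 3 heuristic fails.}
\begin{itemize}
\item The perturbation argument confuses asymptotic convergence with one-step descent. A perturbation $\delta$ of $y_k$ moves $y_{k+r}$ by $((\tilde A_{11})_{1r}+gh)\delta$, and nothing forces the $h$-dependent part to oppose $\delta$. In the example, $y_{k+1}-y_*=(-1.5+0.05h)\delta$: the $h$-independent part overshoots, and the positive term $gh\delta$ actually shrinks the error.
\item The root-locus argument only describes $h\to\infty$. Here the root leaving $z=1$ moves inward, as $N(1)<0$ dictates, and the branches leave the unit disk only at $h=15>L$.
\item Rescaling is not available. The algorithm is fixed and need only work on the fixed interval $[m,L]$; replacing $f$ by $cf$ moves you to $S(cm,cL)$.
\end{itemize}

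\paragraph{Comparison with the paper.}
The paper does not actually prove this lemma. Its only justification is the informal remark that the first gradient must enter in a descent direction, and the example above shows that this is not a valid deduction. Wherever $g<0$ is used (Theorem~\ref{thm: fixed point gives optimum}), it has to be imposed as an additional structural hypothesis. It does hold for gradient descent, heavy ball, Nesterov's method, and the synthesized example with $g=-0.152$. What your argument legitimately proves is the weaker DC-gain condition $N(1)<0$.
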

    

\subsection{IQC Analysis and Assumption}
Following a similar procedure as in the proof of Theorem~\ref{eq: canonical form of LTI system}, we can augment \eqref{eq: canonical structure} with the linear system
\begin{align}\label{eq: filter system}
\begin{aligned}
    \zeta_{k+1} = A_{\zeta} \zeta + B_{\zeta}^{y} y_k  + B_{\zeta}^{u} u_k,~ 
    z_k = C_{\zeta} \zeta_k + D_{\zeta}^{y} y_k  + D_{\zeta}^{u} u_k
\end{aligned}
\end{align}
such that the augmented system given by
\begin{align}\label{eq: augmented system}
\begin{aligned}
\begin{bmatrix}
        \tilde{\xi}_{k+1}\\ \zeta_{k+1} 
    \end{bmatrix}: = x_{k+1} = & \mathbf{A} x_k +  \mathbf{B} u_k\\
z_k = & \mathbf{C} x_k + \mathbf{D}  u_k\\
\mathbf{A} = &
\begin{bmatrix}
    \tilde{A} & 0\\
    B_{\zeta}^{y} \tilde{C} & A_{\zeta}
\end{bmatrix}
\in \mathbb{R}^{(n + 2 \ell ) d \times (n+ 2 \ell) d }, \\
\mathbf{B} = &
\begin{bmatrix}
    \tilde{B} \\ B_{\zeta}^{u} 
\end{bmatrix}
\in \mathbb{R}^{ (n + 2 \ell )d \times d},\\
\mathbf{C} = &
\begin{bmatrix}
    D_{\zeta}^{y} \tilde{C} & C_{\zeta}
\end{bmatrix}
\in \mathbb{R}^{  2(\ell + 1) d \times (n + 2 \ell ) d},\\
\mathbf{D} = &
    D_{\zeta}^{u} \in \mathbb{R}^{ 2(\ell + 1)d \times d}.
\end{aligned}
\end{align}
has the output $z_k$ that is a sequence of lifted signals, that is 
$z_k = (y_{k}, \ldots, y_{k - \ell}, u_{k},  \ldots, u_{k - \ell} ) \in \mathbb{R}^{2 ( \ell + 1 ) d }$.
A minimal realization of \eqref{eq: filter system} can be chosen as
\begin{align}
\begin{aligned}
    & A_{\zeta} = \operatorname{blkdiag}(A_{0}, A_{0}), \in \mathbb{R}^{ 2\ell d \times 2\ell d},~~\\
    & A_{0}= 
    \begin{bmatrix}
        0 & 0 & \cdots & 0\\
        I_{d} & 0 & \cdots & 0\\
        \vdots & \ddots & \ddots & \vdots\\
        0 & \cdots & I_{d} & 0
    \end{bmatrix} \in \mathbb{R}^{\ell d \times \ell d}, ~
    \\
    & B_{\zeta} = 
    \begin{bmatrix}
        B_{\zeta}^{y} & B_{\zeta}^{u}
    \end{bmatrix}
    = \operatorname{blkdiag} (B_0, B_0) \in \mathbb{R}^{ 2 \ell d \times 2 d},~\\
    & B_0 = 
    \begin{bmatrix}
        I_d \\ 0_{(\ell - 1) d \times d}
    \end{bmatrix} \in \mathbb{R}^{ \ell d \times d},\\
    & C_{\zeta} = \operatorname{blkdiag} (C_0, C_0), ~
    C_0 = \begin{bmatrix}
        0_{d \times \ell d} \\ I_{\ell d}
    \end{bmatrix}
   \in \mathbb{R}^{ (\ell + 1) d \times \ell d },\\
   & D_{\zeta}= \begin{bmatrix}
        D_{\zeta}^{y} & D_{\zeta}^{u}
    \end{bmatrix} = \operatorname{blkdiag} (D_0, D_0) 
    \in \mathbb{R}^{2(\ell + 1 ) d \times 2 d}, ~\\
    & D_0 = 
    \begin{bmatrix}
        I_d \\ 0_{\ell d \times d} 
    \end{bmatrix}
    \in \mathbb{R}^{( \ell + 1 ) d \times d}.
\end{aligned}
\end{align}
As the inputs are given by $u_k  = \nabla f (y_k)$, for all $k \geq 0$, pointwise conditions (in terms of $z_k$) can be derived for the sequence of input/output pairs that share the repeated nonlinearities $\nabla f (\cdot)$, that is,
\begin{align}
    \left(z_k - z_*\right)^{\top}
    \mathbf{M}
    \left(z_k - z_*\right)
    \geq 0, ~\text{for all } k \geq 0
\end{align}
where $z_* = (y_*, \ldots, y_*, u_*,\dots, u_*)$ is a reference point, generating a stepwise decreasing Lyapunov function for stability analysis \cite{kulkarni2002all,van2022absolute,carrasco2019convex,li2025first}.
Define
\begin{align*}
& T_\rho := \operatorname{diag}(1,\rho,\rho^2,\dots,\rho^\ell)\in\mathbb{R}^{ (\ell + 1)\times (\ell + 1)}, \\
& \mathcal{T}_{\ell,m,L} : = \begin{bmatrix}
    L & -1\\ -m & 1
\end{bmatrix} \otimes I_{\ell + 1} \in \mathbb{R}^{2(\ell+1) \times 2(\ell+1) }
\end{align*}
The multiplier is chosen as
\begin{subequations}\label{eq: multiplier M}
\begin{align}
\mathbf{M}
&=
{\mathbf{M}}_{Q}
+
{\mathbf{M}}_{\tilde{Q}},
\\
{\mathbf{M}}_{Q}
&=
\left( \mathcal{T}_{\ell,m,L}^\top 
\begin{bmatrix}
0 & Q^{\top}\\
Q & 0
\end{bmatrix}
\mathcal{T}_{\ell,m,L} \right) 
\otimes I_d,
\\
{\mathbf{M}}_{\tilde{Q}}
&=
\left( 
\mathcal{T}_{\ell,m,L}^\top 
\begin{bmatrix}
0 & (T_\rho \tilde{Q} T_\rho)^{\top}\\
T_\rho \tilde{Q} T_\rho & 0
\end{bmatrix}
\mathcal{T}_{\ell,m,L} 
\right)
\otimes I_d,
\end{align}
\end{subequations}
where matrices $Q$ and $\tilde{Q}$ are constrained to be doubly hyperdominant, namely, $Q, \, \tilde{Q} \in \mathcal{Q}$, with
\begin{align*}
\mathcal{Q}= \left\{ Q \in \mathbb{R}^{(\ell + 1) \times (\ell + 1) }: Q_{ij}\le 0,~\forall\, i\neq j, ~ \right.\\
\quad \left. \phantom{Q \in \mathbb{R}^{(\ell + 1) \times (\ell + 1) }}  Q\mathbf{1}\ge 0, ~ Q^{\top}\mathbf{1}\ge 0 \right\}.
\end{align*}
Here, \(\mathcal{T}_{\ell,m,L}\in\mathbb{R}^{2(\ell+1)d\times 2(\ell+1)d}\) denotes the lifted loop transformation from the slope-restricted $[m, L]$ to monotone in $[0, \infty]$, see, e.g., \cite{fetzer2017absolute}. The matrix $\mathbf{M}_{Q}$ represents the IQC for the repeated monotone nonlinearities \cite{van2022absolute,boczar2017exponential}, and $\mathbf{M}_{\tilde{Q}}$ represents the the $\rho$-hard IQC multiplier for $\tilde{z}_{k} = (\tilde{y}_{k}, \ldots, \tilde{y}_{k - \ell}, \tilde{u}_{k}, \ldots, \tilde{u}_{k - \ell})$, where $\tilde{y}_k = \rho^{-k} y_k$, $\tilde{u}_k = \rho^{-k} u_k$, but starting from time $k$, that is 
\begin{align*}
\sum_{t = k}^{T} \rho^{-2 t}
    \left(z_t - z_*\right)^{\top}
    \mathbf{M}
    \left(z_t - z_*\right)
    \geq 0, ~\text{for all } T > t \geq 0,
\end{align*}
see, e.g., \cite{lessard2016analysis,boczar2017exponential}.
These conditions ensure that $\mathbf{M}$ defines a valid pointwise quadratic constraint for the repeated monotone nonlinearity  \cite{kulkarni2002all,van2022absolute}.
The convergence of the unconstrained algorithms \eqref{eq: gradient input to u_k}, \eqref{eq: canonical structure} is characterized by the following theorem, whose proof is similar to \cite{lessard2016analysis} and omitted here.
\begin{theorem}\label{thm: IQC analysis for unconstrained algorithm}
Let $(\mathbf{A}, \mathbf{B},\mathbf{C},\mathbf{D})$ be defined by \eqref{eq: augmented system} and $M$ given in \eqref{eq: multiplier M}.
Suppose there exist a $\mathbf{P} \succ 0$ and $\rho \in (0, 1)$, such that the following LMI,
\begin{align}\label{eq: LMI from IQC}
    \begin{bmatrix}
        \mathbf{A}^{\top} \mathbf{P} \mathbf{A} - \rho^2 \mathbf{P} & \mathbf{A}^{\top} \mathbf{P} \mathbf{B}\\
        \mathbf{B}^{\top} \mathbf{P} \mathbf{A} & \mathbf{B}^{\top} \mathbf{P} \mathbf{B}
    \end{bmatrix}
    +
    \begin{bmatrix}
        \mathbf{C}^{\top} \\ \mathbf{D}^{\top}
    \end{bmatrix}
    \mathbf{M}
    \begin{bmatrix}
        \mathbf{C} & \mathbf{D}
    \end{bmatrix}
    \preceq 0
\end{align}
is feasible. Then, the unconstrained optimization algorithm \eqref{eq: gradient input to u_k}, \eqref{eq: canonical structure} is linearly convergent with rate $\rho$, and there exists a quadratic positive definite Lyapunov function $V (x_{k}) = \left(x_{k} - x_{*}\right)^\top \mathbf{P} \left(x_{k} - x_{*}\right)$, such that for all $k \geq 1$,
\[V (x_{k + 1}) = \left(x_{k + 1} - x_{* + 1}\right)^\top \mathbf{P} \left(x_{k + 1} - x_{* + 1}\right) \leq \rho^2 V (x_{k}),\]
where $(x_*, u_*, z_*)$ with $z_* = (y_*, \ldots, y_*, u_*,\dots, u_*)$, is a reference point of \eqref{eq: augmented system}, and $x_{* + 1}$ denotes the next state of $x_*$.
\end{theorem}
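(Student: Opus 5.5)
We follow the standard dissipation argument of \cite{lessard2016analysis}, adapted to the pointwise (stepwise) multiplier $\mathbf{M}$. Fix an optimizer $y_*$ of the unconstrained problem, so $u_* = \nabla f(y_*) = 0$. Let $x_*$ be the corresponding reference state of \eqref{eq: augmented system}. By Lemma~\ref{lem: properties of A} this gives $\tilde{\xi}^{(1)}_* = \mathbf{1}_r\otimes y_*$ and $\tilde{\xi}^{(2)}_* = -K_2^{\dag}K_1\tilde{\xi}^{(1)}_*$, with the filter state $\zeta_*$ stacking copies of $y_*$ and $u_*$. Then $x_{*+1} = \mathbf{A}x_* + \mathbf{B}u_* = x_*$ and $z_* = \mathbf{C}x_* + \mathbf{D}u_*$. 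Setting $\delta x_k = x_k - x_*$, $\delta u_k = u_k - u_*$ and $\delta z_k = z_k - z_*$, linearity gives the error dynamics
\[
\delta x_{k+1} = \mathbf{A}\delta x_k + \mathbf{B}\delta u_k, \qquad \delta z_k = \mathbf{C}\delta x_k + \mathbf{D}\delta u_k .
\]

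\textbf{Step 1 (LMI to dissipation inequality).} Multiply \eqref{eq: LMI from IQC} on the left by $(\delta x_k,\delta u_k)^\top$ and on the right by $(\delta x_k,\delta u_k)$. This yields
\[
V(x_{k+1}) - \rho^2 V(x_k) + \delta z_k^\top \mathbf{M}\,\delta z_k \le 0 .
\]

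\textbf{Step 2 (sign of the quadratic constraint).} Show that $\delta z_k^\top\mathbf{M}\,\delta z_k \ge 0$ along every trajectory. Once the lifted window $z_k$ consists of genuine gradient pairs $u_{k-i} = \nabla f(y_{k-i})$, this is the pointwise constraint. After the loop transformation $\mathcal{T}_{\ell,m,L}$, the pairs $(L(y-y_*)-(u-u_*),\,(u-u_*)-m(y-y_*))$ come from a monotone map that is the gradient of a convex function; this uses $f\in S(m,L)$. For a doubly hyperdominant $Q\in\mathcal{Q}$, the bilinear form $\sum_{i,j}Q_{ij}\,\hat u_i^\top \hat y_j$ is nonnegative. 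The argument writes $Q$ as a nonnegative combination of the diagonal-dominance terms and the off-diagonal terms $e_i(e_i-e_j)^\top$, then applies the cyclic monotonicity/convexity inequalities of \cite{kulkarni2002all,van2022absolute}. The same holds for $T_\rho\tilde{Q}T_\rho$ at each step, since it applies the $\rho$-weighted version within a window starting at $k$ \cite{boczar2017exponential}. I would cite these results rather than reprove them.

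\textbf{Step 3 (contraction and rate).} Step~2 gives $V(x_{k+1}) \le \rho^2 V(x_k)$; since $x_{*+1} = x_*$, this is exactly the stated inequality. Iterating gives
\[
\lambda_{\min}(\mathbf{P})\|x_k - x_*\|^2 \le \rho^{2k}\lambda_{\max}(\mathbf{P})\|x_0 - x_*\|^2 .
\]
Since $y_k - y_* = \tilde{C}(\tilde{\xi}_k - \tilde{\xi}_*)$, we get $\|y_k - y_*\| = O(\rho^k)$, which is linear convergence with rate $\rho$.

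\textbf{Main obstacle.} The hard part is Step~2, specifically the initial transient and the $\tilde{Q}$ term. For $k<\ell$ the filter state contains initial values that need not be gradient pairs. I would handle this by initializing $\zeta_0 = \zeta_*$ (equivalently, padding with $y_{-i} = y_*$, $u_{-i} = u_*$), so every window is admissible; this is the reason the statement is only claimed for $k \ge 1$. The $\rho$-weighting in $\mathbf{M}_{\tilde{Q}}$ is also not a genuine pointwise constraint. I would instead argue via the $\rho$-hard summed form: multiply each Step~1 inequality by $\rho^{-2t}$ and sum from $t=k$ to $T$, which telescopes into a bound on $\rho^{-2(T+1)}V(x_{T+1})$. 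If needed, I would take the stepwise statement from \cite{li2025first}.
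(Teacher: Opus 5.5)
Your Steps 1--3 match the argument the paper intends. Its proof is omitted with a pointer to \cite{lessard2016analysis}, and it relies on the remark before the theorem that $\mathbf{M}$ is a valid \emph{pointwise} constraint. The gap is that your proposal never secures $\delta z_k^\top\mathbf{M}_{\tilde Q}\,\delta z_k\ge 0$ at a single $k$, and the stated one-step inequality needs exactly that. Step~2 asserts it, and your last paragraph retracts it.

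The retraction is justified, and it applies to the paper's remark as well: $\tilde Q\in\mathcal{Q}$ does not make $T_\rho\tilde QT_\rho$ doubly hyperdominant. Take $\ell=1$ and $\tilde Q=\begin{bmatrix}1&-1\\-1&1\end{bmatrix}$. Then $T_\rho\tilde QT_\rho=\begin{bmatrix}1&-\rho\\-\rho&\rho^2\end{bmatrix}$ has second row sum $\rho^2-\rho<0$. In the loop-transformed variables $p=L\delta y-\delta u$, $q=\delta u-m\delta y$, the term equals $2(q_k-\rho q_{k-1})^\top(p_k-\rho p_{k-1})$. This is negative for the admissible scalar pairs $(p_k,q_k)=(1,1)$, $(p_{k-1},q_{k-1})=(1.05,100)$ with $\rho=0.9$. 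So the Kulkarni--Safonov argument does not cover this term.

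Your replacement cannot deliver the statement either. Weighting Step~1 by $\rho^{-2(t+1)}$ and summing from $t=k$ to $T$ gives $V(x_{T+1})\le\rho^{2(T+1-k)}V(x_k)$ only when the windowed multiplier sum is nonnegative. The claimed $V(x_{k+1})\le\rho^2 V(x_k)$ is the case $T=k$, which is exactly the pointwise constraint you gave up. At best this gives the non-monotone bound $\|x_k-x_*\|\le\sqrt{\operatorname{cond}(\mathbf{P})}\,\rho^k\|x_0-x_*\|$.

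Even that bound needs a real proof of the summed $\rho$-hard constraint, because the scaled pairs $\rho^{-t}(p_t,q_t)$ come from a time-varying map. With the same $\tilde Q$, history padded by $(y_*,u_*)$, and pairs $(p_0,q_0)=(1,1)$, $(p_1,q_1)=(\epsilon,1)$, the sum over $t=0,1$ is a positive multiple of $\rho(2\rho-1)+\epsilon(1-\rho)$. That is negative for $\rho<\tfrac12$ and small $\epsilon>0$.

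Falling back on \cite{li2025first} does not close the gap, since the paper says that reference does not address this multiplier class. What is missing is a condition that makes the $\rho$-weighted term pointwise valid, for example $T_\rho\tilde QT_\rho\in\mathcal{Q}$. Under that condition $\mathbf{M}_{\tilde Q}$ is just another $\mathbf{M}_Q$-type term, and your Steps 1--3 go through unchanged. Without it, the conclusion has to be weakened to the multi-step bound.

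A second, easily fixed point concerns the reference point. You take $y_*$ to be the unconstrained minimizer, so that $u_*=0$ and $x_{*+1}=x_*$. The statement, however, is phrased for an arbitrary reference point and its successor $x_{*+1}$. That general version is what Theorem~\ref{thm: same_rate} invokes, with $u_*=\nabla f(y_*)\neq 0$ and $x_{*+\frac12}\neq x_*$.

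Nothing in your Steps 1--2 needs the fixed-point property. Define $\delta x_{k+1}:=x_{k+1}-x_{*+1}=\mathbf{A}\delta x_k+\mathbf{B}\delta u_k$. The multiplier inequality only uses that $(y_*,u_*)$ lies on the graph of $\nabla f$, since you can shift the underlying convex function. So you should drop that assumption.

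Also, padding $\zeta_0$ with $(y_*,u_*)$ makes the inequality hold from $k=0$. The restriction ``$k\ge 1$'' in the statement is therefore not explained by the window length.
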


We assume the feasibility of LMI \eqref{eq: LMI from IQC}, which is summarized by the following assumption.
\begin{assumption}\label{assumption 1}
    For the unconstrained algorithm \eqref{eq: gradient input to u_k}, \eqref{eq: canonical structure}, there exists a lifting integer $\ell \in \mathbb{N}$ such that Theorem~\ref{thm: IQC analysis for unconstrained algorithm} holds.
    %
    %
\end{assumption}
This assumption is aligned with a time-domain hard IQC imposed on any truncated horizon with arbitrary initial time \cite{van2022absolute}, \cite{li2025first}, which generally holds for many multipliers \cite{lessard2016analysis}.

\section{Projected Algorithm and Analysis}\label{sec: Projected Algorithm and Analysis}
\subsection{Projected Algorithm on the Lyapunov-induced Norm}
Now, for the constrained optimization problem \eqref{eq: constrained convex optimization problem}, we propose the projected algorithm that preserves the same convergence rates.

The most straightforward projected method is a naive direct projection of the $y_{k + r }$ state back to the constraint set, that is,
\begin{subequations}\label{eq: state-space form of projected algorithm}
\begin{align}
\begin{bmatrix}
y_{k+ r - \frac{1}{2} } \\ y_{k + r - 1 } \\ \vdots \\ y_{k + 1} \\ \tilde{\xi}^{(2)}_{k+1}
\end{bmatrix}
= & \tilde{A} 
    \begin{bmatrix}
    y_{k + r -1 }  \\ y_{k + r - 2 } \\ \vdots \\ y_{k} \\ \tilde{\xi}^{(2)}_{k} 
    \end{bmatrix} 
    + \tilde{B} \nabla f( y_k )\\
y_{k + r} = & \Pi_{\Omega} \left(y_{k + r - \frac{1}{2}} \right)
\end{align}
\end{subequations}
where $y_{k + r - \frac{1}{2}}$ represents an intermediate state that will be projected back to the constraint set $\Omega$, $(\tilde{A}, \tilde{B})$ are given in \eqref{eq: canonical structure}, and $\Pi_{\Omega} (\cdot)$ is the projection onto set $\Omega$.
It can be analyzed similarly to \cite{li2025first} that the unique fixed point of \eqref{eq: state-space form of projected algorithm} provides the optimal solution to problem \eqref{eq: constrained convex optimization problem}.
However, it is also clear that \eqref{eq: state-space form of projected algorithm} is a projection on the Euclidean norm, under which the error bound is not guaranteed to decrease monotonically at the same rate.

Instead, we start with the augmented system in \eqref{eq: augmented system}. Let us partition the state as $x_k = (y_{k + r - 1}, \mathbf{y}_{k}, \eta_k)$, then $\mathbf{A}$, $\mathbf{B}$ and the Lyapunov matrix $\mathbf{P}$ are partitioned as
\begin{align*}
    & \mathbf{A} = \begin{bmatrix}
        \mathbf{A}_{11} & \mathbf{A}_{1r} & \mathbf{A}_{1r_{+}} \\
        \mathbf{A}_{r1} & \mathbf{A}_{rr} & \mathbf{A}_{rr_{+}}\\
        \mathbf{A}_{r_{+} 1 } & \mathbf{A}_{r_{+} r} & \mathbf{A}_{r_{+}r_{+}}
    \end{bmatrix}, ~ \mathbf{A}_{11} \in \mathbb{R}^{d \times d},\\
    & \mathbf{B} = \begin{bmatrix}
        \mathbf{B}_{1} \\ \mathbf{B}_{r} \\ \mathbf{B}_{r_{+}}
    \end{bmatrix},
    ~\mathbf{B}_{1} = g I_{d}, ~ g < 0,\\
    & \mathbf{P} = 
    \begin{bmatrix}
        \mathbf{P}_{11} & \mathbf{P}_{1r} & \mathbf{P}_{1r_{+}} \\
        \mathbf{P}_{r1} & \mathbf{P}_{rr} & \mathbf{P}_{rr_{+}}\\
        \mathbf{P}_{r_{+} 1 } & \mathbf{P}_{r_{+} r} & \mathbf{P}_{r_{+}r_{+}}
    \end{bmatrix}, \mathbf{P}_{11} \in \mathbb{R}^{d \times d}.
\end{align*}
respectively, where $g < 0$ by Lemma~\ref{lem: CAB negative}. 

We propose the projected algorithm on the Lyapunov-induced norm:
\begin{align}\label{alg}
\begin{aligned}
    x_{k + \frac{1}{2} } = & \mathbf{A} x_k + \mathbf{B}\nabla f ( y_{k} ), ~~ 
    y_{k} = 
    \begin{bmatrix}
        \tilde{C} & 0
    \end{bmatrix} x_k
    \\
    x_{k + 1} = & \Pi_{\mathcal{D}}^{\mathbf{P}} (x_{k+ \frac{1}{2} })
\end{aligned}
\end{align}
where $(\mathbf{A}, \mathbf{B})$ are given in \eqref{eq: augmented system}, $\tilde{C}$ is defined in \eqref{eq: canonical structure}, and $\Pi_{\mathcal{D}}^{\mathbf{P}} ( \cdot )$ is the projection onto set $\mathcal{D}$ in the $\mathbf{P}$ weighted norm, with
\begin{align}
\begin{aligned}
& \mathcal{D} := \mathcal{Y} \times \mathbb{R}^{(n + 2 \ell - r) d}\\
& \mathcal{Y} := \left\{ \xi \in \mathbb{R}^{rd} : \xi^{(1)} \in \Omega, ~ \mathbf{A}_{r1} \xi^{(1)} = (I -  \mathbf{A}_{rr} )\xi^{(2)}\right\}.
\end{aligned}
\end{align}
Here,  $\mathcal{Y}$ includes the physical hard constraint enforcing the next step of $y_{t}$ should be $y_{t+1}$, for $t = k, k+1, \ldots, k+r-1$. 

By solving the $\mathbf{P}$-norm projection in \eqref{alg}, we can obtain the explicit update:
\begin{subequations}
\begin{align}
y_{k+ r - \frac{1}{2} } = & \mathbf{A}_{11} y_{k + r  - 1} + \mathbf{A}_{1r} \mathbf{y}_{k} + \mathbf{A}_{1r_{+}} \eta_{k} + g \nabla f ( y_k ) \\
y_{k + r} = & \Pi_{\Omega}^{\mathbf{S}} ( y _{k+ r - \frac{1}{2}} ) \label{eq: projection on the S norm}\\
\eta_{k + 1} = & \mathbf{A}_{r_{+}1} y_{k + r - 1} + \mathbf{A}_{r_{+} r} \mathbf{y}_{k} + \mathbf{A}_{r_{+}r_{+}} \eta_{k} \nonumber \\
& + B_{\zeta}^{u} \nabla f(y_k) - \mathbf{P}_{r_{+} r_{+}}^{-1} \mathbf{P}_{1r_{+}}^{\top} \left(y_{k + r} - y_{k+ r - \frac{1}{2} }\right)
\end{align}
\end{subequations}
where $\mathbf{S} := \mathbf{P}_{11} - \mathbf{P}_{1r_{+}} \mathbf{P}_{r_{+}r_{+}}^{-1} \mathbf{P}_{1r_{+}}^{\top} \succ 0$. Because $\mathbf{S} \in \mathbb{R}^{d}$ also admits a Kronecker structre, the projection in \eqref{eq: projection on the S norm} is equivalent to a standard projection $\Pi_{\Omega} (\cdot)$. 
Also, the update of $\mathbf{y}_{k}$ is trivially $\mathbf{y}_{k+1} = (y_{k+r - 1}, \ldots, y_{k+1})$.

As the algorithmic state does not depend on the IQC filter state except for the information from the Lyapunov matrix $\mathbf{P}$, we only need to implement algorithmic state $\xi_k = (y_{k+r-1}, \mathbf{y}_{k}, \xi^{(2)}_k)$ in practice, that is,
\begin{subequations}\label{eq: projected algorithm clean}
\begin{align}
    \xi^{(1)}_{k + \frac12 } = & \begin{bmatrix}
        y_{k + r - \frac12 } \\ \mathbf{y}_{k + \frac12 }
    \end{bmatrix}
    = \tilde{A}_{11} \xi^{(1)}_{k} + \tilde{A}_{12} \xi^{(2)}_{k} + \tilde{B}_{1} \nabla f \left( \tilde{C}_{1} \xi^{(1)}_k \right) \\
 \xi^{(1)}_{k + 1 }= & 
 \begin{bmatrix}
     \Pi_{\Omega} \left( y _{k+ r - \frac{1}{2}} \right) \\ \mathbf{y}_{k+\frac12 } 
 \end{bmatrix}\\
\xi^{(2)}_{k + 1} = & \tilde{A}_{21} \xi^{(1)}_{k} + \tilde{A}_{22} \xi^{(2)}_{k} + \chi \left(y_{k + r} - y_{k+ r - \frac{1}{2} } \right)
\end{align}
\end{subequations}
where system matrices are given in \eqref{eq: canonical form of LTI system}, and \[\chi: = \begin{bmatrix}
    I_{(n - r) d} & 0 
\end{bmatrix} \mathbf{P}_{r_{+} r_{+}}^{-1} \mathbf{P}_{1r_{+}}^{\top} \in \mathbb{R}^{(n - r) d \times d} \] is the first $(n - r)d$ rows of $\mathbf{P}_{r_{+} r_{+}}^{-1} \mathbf{P}_{1r_{+}}^{\top}$.

\subsection{Optimality and Rate-preserving Convergence}
We first discuss the fixed point of the projected algorithm \eqref{alg}, summarized by the following theorem.
\begin{theorem}\label{thm: fixed point gives optimum}
Assume that for any $f \in S (m, L)$ and closed convex set $\Omega$, Algorithm~\eqref{alg} admits a unique fixed point
\( x_* = (y_*, \mathbf{y}_{*}, \eta_*) \).
Then, $y_*$ is the unique optimal solution to problem
\eqref{eq: constrained convex optimization problem}.
\end{theorem}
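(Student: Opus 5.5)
The plan is to write down the fixed-point equations of \eqref{alg} explicitly, reduce them to a condition on $y_*$ alone, and compare that condition with the optimality condition \eqref{eq: optimal solution to the constrained problem}. Let $x_* = (y_*, \mathbf{y}_*, \eta_*)$ be the fixed point, and let $\delta := y_{*+r-\frac12} - y_*$ be the correction produced by the projection at the fixed point.

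\textbf{Step 1 (the delay chain).} Membership in $\mathcal{D}$ through $\mathcal{Y}$, together with the shift structure of $\tilde{A}_{11}$ in Theorem~\ref{eq: canonical form of LTI system}, forces every entry of $\mathbf{y}_*$ to equal $y_*$. Hence $\tilde{\xi}^{(1)}_* = \mathbf{1}_r \otimes y_*$ and $\tilde{C}\tilde{\xi}_* = y_*$, so the gradient evaluated at the fixed point is $\nabla f(y_*)$.

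\textbf{Step 2 (the projection step).} From the explicit update, $y_* = \Pi_\Omega^{\mathbf{S}}(y_* + \delta)$. Since $\mathbf{S} = s I_d$ with $s>0$, this is the Euclidean projection. By the characterization of the normal cone in Section~\ref{Preliminaries}, this gives $y_* \in \Omega$ and $\delta \in N_\Omega(y_*)$.

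\textbf{Step 3 (the remaining state equations).} Only the algorithmic state matters, so I work with the clean form \eqref{eq: projected algorithm clean}. At the fixed point:
\begin{align*}
y_* + \delta &= [\tilde{A}_{11}]_{1}\,(\mathbf{1}_r\otimes y_*) + [\tilde{A}_{12}]_1\, \xi^{(2)}_* + g\nabla f(y_*),\\
\xi^{(2)}_* &= \tilde{A}_{21}(\mathbf{1}_r\otimes y_*) + \tilde{A}_{22}\xi^{(2)}_* - \chi\delta .
\end{align*}
I decompose $\xi^{(2)}_* = -K_2^{\dagger}K_1(\mathbf{1}_r\otimes y_*) + w$. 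By Lemma~\ref{lem: properties of A}, the terms in $y_*$ cancel in both equations. What remains is
\[
(I-\tilde{A}_{22})\,w = -\chi\delta, \qquad \delta = [\tilde{A}_{12}]_1\, w + g\nabla f(y_*).
\]
Solvability of the first equation needs $I-\tilde{A}_{22}$ to be invertible. I expect this to follow from the detectability/rank condition used in the proof of Lemma~\ref{lem: properties of A}; if it does not, I would instead argue on the full stacked system $[K_1\;K_2]$. Eliminating $w$ gives $c\,\delta = g\nabla f(y_*)$ with $c := 1 - [\tilde{A}_{12}]_1 (I-\tilde{A}_{22})^{-1}\chi$. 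By the Kronecker structure, $c$ is a scalar multiple of $I_d$.

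\textbf{Step 4 (the sign of $c$).} I expect this to be the main obstacle. If $c>0$, then since $g<0$ by Lemma~\ref{lem: CAB negative}, we get $-\nabla f(y_*) = (c/|g|)\,\delta \in N_\Omega(y_*)$. That is exactly \eqref{eq: optimal solution to the constrained problem}, and strong convexity gives uniqueness of the optimum.

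To rule out $c\le 0$, I would use the hypothesis that a unique fixed point exists for \emph{every} $f\in S(m,L)$ and every closed convex $\Omega$. If $c=0$, then $\nabla f(y_*)=0$ for every admissible $\Omega$. Choosing $\Omega$ to exclude the unconstrained minimizer contradicts existence of a fixed point.

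If $c<0$, then $\delta$ is a positive multiple of $\nabla f(y_*)$, so $\nabla f(y_*)\in N_\Omega(y_*)$. Take $\Omega$ to be a segment and $f$ a strongly convex quadratic whose minimizer lies in the interior of the segment. Then the only point satisfying this condition is an endpoint where the gradient points outward. Running the construction backwards (pick such a $y$, set $\delta = (g/c)\nabla f(y)$, and define $w$ and $\eta$ from Step 3 and the explicit update) produces a fixed point at each such endpoint. I would check that this yields either two fixed points or a contradiction with the convergence certified by Theorem~\ref{thm: IQC analysis for unconstrained algorithm}. Failing that, I would try to prove $c>0$ directly from the LMI \eqref{eq: LMI from IQC}, by interpreting $c$ as a DC-gain quantity of the stable loop.

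Finally, the same backward construction with $c>0$ shows that the optimal solution indeed generates a fixed point. Uniqueness of the fixed point then identifies $y_*$ with the unique optimal solution of \eqref{eq: constrained convex optimization problem}.
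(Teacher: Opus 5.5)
Your route is the paper's route. The paper also writes out the fixed-point equations of \eqref{eq: projected algorithm clean} and uses Lemma~\ref{lem: properties of A} to remove the dependence on $y_*$. It obtains a scalar relation $g\nabla f(y_*)=\gamma\,(y_{*+\frac12}-y_*)$, which is your $c\,\delta=g\nabla f(y_*)$, and concludes from $g<0$ and $\gamma>0$. The differences are in execution:
\begin{itemize}
\item You eliminate $\xi^{(2)}_*$ with $(I-\tilde A_{22})^{-1}$, whereas the paper multiplies by the projector $N=I-K_2K_2^{\dagger}$ and then by $E^\top$.
\item Your Step~1 makes explicit that $\xi^{(1)}_*=\mathbf{1}_r\otimes y_*$. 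The paper uses this silently, and it is genuinely needed: the proof of Lemma~\ref{lem: properties of A} only yields $NK_1\xi^{(1)}=0$ for $\xi^{(1)}$ of that form.
\item Your Step~4 is an actual argument for the sign. You exclude $c=0$ by taking $\Omega$ away from the unconstrained minimizer, and $c<0$ by exhibiting several fixed points on a segment. The paper only asserts that uniqueness forces $\gamma>0$.
\end{itemize}

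The step you need to repair is the invertibility of $I-\tilde A_{22}$. It does not follow from detectability. The rank condition in the proof of Lemma~\ref{lem: properties of A} only gives full column rank of $K_2$. That still permits an eigenvector $w_0$ of $\tilde A_{22}$ at $1$ with $[\tilde A_{12}]_1w_0\neq 0$.

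Since $\tilde A_{12}$ is nonzero only in its first block row and $K_2$ has full column rank, invertibility of $I-\tilde A_{22}$ is equivalent to the paper's own unproved claim that $NE$ is nonsingular, i.e.\ $\operatorname{im}E\cap\operatorname{im}K_2=\{0\}$. So retreating to the stacked system does not avoid the issue.

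You can close it with the theorem's own hypothesis, taking $\Omega=\mathbb{R}^d$. Then $\delta=0$. Suppose a nonzero $w_0\in\ker(I-\tilde A_{22})$ existed. Take $w=w_0$ and the $y$ solving $g\nabla f(y)=-[\tilde A_{12}]_1w_0$; this $y$ exists because $\nabla f$ is a bijection. This gives a second fixed point besides the optimal one, contradicting uniqueness.

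Two minor slips:
\begin{itemize}
\item With your sign convention for $\chi$, the elimination gives $c=1+[\tilde A_{12}]_1(I-\tilde A_{22})^{-1}\chi$, not $1-\cdots$.
\item In the case $c<0$, the interior minimizer also satisfies $\nabla f(y)\in N_\Omega(y)$ trivially. So you get three fixed points rather than ``only the endpoints,'' which only strengthens the contradiction.
\end{itemize}
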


\begin{proof}
Let 
\[
x_{*+\frac12} =
\begin{bmatrix}
\xi_{*+\frac12}\\ \eta_{* + \frac12} 
\end{bmatrix}
:=
\mathbf A x_*+\mathbf B \nabla f(y_*).
\]
Since $x_*$ is a fixed point of Algorithm~\eqref{alg}, we also have
\[
x_*=\Pi_{\mathcal D}^{\mathbf P}(x_{*+\frac12}).
\]
As the algorithmic state $\xi_k$ does not depend on the IQC
filter state $\zeta_k$, it is sufficient to analyze the fixed point of \eqref{eq: projected algorithm clean} solely. 
Expanding the above equations, we have 
\begin{subequations}\label{eq: fp_proj_conditions}
\begin{align}
\xi^{(1)}_{* + \frac12 } = & \begin{bmatrix}
        y_{* + \frac12 } \\ \mathbf{y}_{* + \frac12 }
    \end{bmatrix}
    = \tilde{A}_{11} \xi^{(1)}_{*} + \tilde{A}_{12} \xi^{(2)}_{*} + \begin{bmatrix}
        g \nabla f \left( y_{*} \right) \\ 0
    \end{bmatrix} \\
 y_{*}= & \Pi_{\Omega} \left( y _{* + \frac12 } \right) \\
\xi^{(2)}_{*} =&  \tilde{A}_{21} \xi^{(1)}_* + \tilde{A}_{22} \xi^{(2)}_*
-\chi \left( y_*-y_{*+\frac12} \right), \label{eq: fp_proj_conditions_c}
\end{align}
\end{subequations}
and
\begin{align}\label{eq: normal_cone_from_projection}
y_{*+\frac12}-y_* \in N_{\Omega}(y_*).
\end{align}
Let $[K_1 ~ K_2]: = \tilde{A} - I$, we obtain by rearranging \eqref{eq: fp_proj_conditions},
\begin{align}\label{eq: fixed point condition compact}
K_1 \xi^{(1)}_{*}
+ K_2 \xi^{(2)}_{*} = - \tilde{B} \nabla f (y_*) - H r_*
\end{align}
where $H = \begin{bmatrix}
    I_{d} \\ 0 \\ \chi
\end{bmatrix}$ and $r_* : = y_* - y_{*+ \frac12}$.
By Lemma~\ref{lem: properties of A}, left multiplying $N: = I - K_2 K_2^{\dag}$ to both side of \eqref{eq: fixed point condition compact} gives
\begin{align*}
    N \left( \tilde{B} \nabla f (y_*) +  H r_* \right) = 0
\end{align*}
where $N K_2 = 0$ follows from the fact that $N$ is the orthogonal projection onto $ \left(\text{im} (K_2) \right)^{\perp}$.
%
Since $\tilde{B} = E g$, with $E = \begin{bmatrix}
    I_{d} \\ 0 
\end{bmatrix}$, 
left multiplying $E^\top$ to the above equation, we obtain
\begin{align*}
    (E^\top N E) g \nabla f(y_*) = - E^\top N  H r_*.
\end{align*}
Note that $E^\top N E = E^\top N^\top N E \succ 0$, as $N E \in \mathbb{R}^{d}$ is nonsingular in a well-posed unconstrained algorithmic structure, that is, when $r_* = 0$.
Therefore, by \eqref{eq: normal_cone_from_projection}, we have 
\begin{align}\label{eq: fixed point final condition}
    {g}\nabla f(y_*) \in \Gamma N_{\Omega}(y_*)
\end{align}
where \[
\Gamma :=
(E^\top N E)^{-1} E^\top N  H =
\gamma I_d
\]
since the above matrices admit a Kronecker structure.
As $g <0$ by Lemma~\ref{lem: CAB negative}, then \eqref{eq: fixed point final condition} represents a necessary condition that the optimal solution to problem $\min_{y \in \Omega} \gamma f(y)$ should satisfy.
By the assumption of the uniqueness of $y_*$, $\gamma$ must be positive.
\end{proof}

\begin{theorem}\label{thm: same_rate}
    Suppose Assumption~\ref{assumption 1} holds \ml{such that the unconstrained algorithm \eqref{eq: gradient input to u_k}, \eqref{eq: canonical structure} with relative degree $r \geq 1$ linearly converges with rate $\rho \in (0, 1)$}, then the corresponding projected algorithm~\eqref{alg} also linearly converges at the same rate $\rho$. 
\end{theorem}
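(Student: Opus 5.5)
The plan is to take the fixed point $x_*=(y_*,\mathbf{y}_*,\eta_*)$ of Algorithm~\eqref{alg} from Theorem~\ref{thm: fixed point gives optimum} as the reference point, and to show that $V(x_k)=\|x_k-x_*\|_{\mathbf P}^2$ contracts by $\rho^2$ at every step. The key observation is that $x_*\in\mathcal D$ and $x_*=\Pi^{\mathbf P}_{\mathcal D}(x_{*+\frac12})$, where $x_{*+\frac12}=\mathbf A x_*+\mathbf B\nabla f(y_*)$. By the non-expansiveness \eqref{eq: nonexpansiveness of projection} of the $\mathbf P$-norm projection onto the closed convex set $\mathcal D$ (closed and convex because $\Omega$ is, and the extra constraint in $\mathcal Y$ is linear), we obtain
\[
V(x_{k+1})=\|\Pi^{\mathbf P}_{\mathcal D}(x_{k+\frac12})-\Pi^{\mathbf P}_{\mathcal D}(x_{*+\frac12})\|_{\mathbf P}^2\le \|x_{k+\frac12}-x_{*+\frac12}\|_{\mathbf P}^2 .
\]
The problem therefore reduces to bounding the pre-projection error, which evolves by the same linear map as in the unconstrained algorithm:
\[
x_{k+\frac12}-x_{*+\frac12}=\mathbf A(x_k-x_*)+\mathbf B(u_k-u_*), \qquad u_*:=\nabla f(y_*).
\]

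Next, multiply the LMI \eqref{eq: LMI from IQC} on both sides by the vector $(x_k-x_*,\,u_k-u_*)$. This gives
\[
\|x_{k+\frac12}-x_{*+\frac12}\|_{\mathbf P}^2-\rho^2\|x_k-x_*\|_{\mathbf P}^2+(z_k-\bar z)^\top\mathbf M(z_k-\bar z)\le 0,
\]
where $\bar z:=\mathbf C x_*+\mathbf D u_*$. It then suffices to show that the quadratic form in $\mathbf M$ is nonnegative. To identify $\bar z$, use that $x_*\in\mathcal D$, so the $\mathcal Y$-constraint forces the delayed block $\mathbf y_*$ to be consistent with $y_*$, i.e.\ $\mathbf y_*=\mathbf 1\otimes y_*$. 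The filter block of $x_*$ is a fixed point of the shift filter driven by the constants $y_*,u_*$, which is enforced by the unconstrained $\eta$-update at the fixed point. Hence $\bar z=z_*=(y_*,\dots,y_*,u_*,\dots,u_*)$. For $k\ge \ell$ the filter states are genuinely the lifted past signals; otherwise one initializes the filter consistently. The components of $z_k$ are then actual iterate pairs $(y_t,\nabla f(y_t))$, and $(y_*,\nabla f(y_*))$ is a point of the same repeated nonlinearity. The pointwise O'Shea--Zames--Falb/$\rho$-hard constraint with $Q,\tilde Q\in\mathcal Q$ then yields $(z_k-z_*)^\top\mathbf M(z_k-z_*)\ge 0$, exactly as in the unconstrained Theorem~\ref{thm: IQC analysis for unconstrained algorithm}. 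Note that $u_*$ need not vanish here, but the constraint only uses slope restriction of $\nabla f$ between pairs of points, not $\nabla f(y_*)=0$.

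Combining the two inequalities gives $V(x_{k+1})\le\rho^2V(x_k)$. Since $\mathbf P\succ0$, it follows that $\|y_k-y_*\|\le\sqrt{\kappa(\mathbf P)}\,\rho^k\|x_0-x_*\|$, i.e.\ linear convergence at rate $\rho$, and by Theorem~\ref{thm: fixed point gives optimum} $y_*$ is the constrained optimum.

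The main obstacle is the identification $\bar z=z_*$, together with the legitimacy of the multiplier inequality when the reference point is a projected fixed point rather than an unconstrained one. This is exactly where the $\mathcal Y$ constraint is needed. Without it, projection in the $\mathbf P$-norm could move the delayed entries $\mathbf y$ and break the relation $\mathbf y_{k+1}=(y_{k+r-1},\dots,y_{k+1})$. The lifted $z_k$ would then no longer consist of true gradient pairs, and the hard IQC would fail. My first step is therefore to verify from the explicit update (that $\mathbf y_{k+1}$ is the trivial shift and $\eta$ receives only the correction term) that the iterates and the fixed point both remain in the set where the lifting is exact. A secondary point is the existence of the fixed point, which is assumed in Theorem~\ref{thm: fixed point gives optimum}. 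Failing that assumption, existence follows from the contraction itself, since the composite map is a $\rho$-contraction in $\|\cdot\|_{\mathbf P}$ on its invariant set.
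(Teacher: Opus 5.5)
Your skeleton is the paper's: take the fixed point $x_*$, use non-expansiveness of $\Pi^{\mathbf P}_{\mathcal D}$ to pass to the half step, and bound the half step by the Lyapunov inequality behind \eqref{eq: LMI from IQC}. The paper simply asserts $\|x_{k+\frac12}-x_{*+\frac12}\|_{\mathbf P}^2\le\rho^2\|x_k-x_*\|_{\mathbf P}^2$ ``by Assumption~\ref{assumption 1}''. You try to justify this via the S-procedure, and you correctly locate the crux, but your resolution of it is false.

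\textbf{The multiplier term.} Nonnegativity of $(z_k-\bar z)^\top\mathbf M(z_k-\bar z)$ needs two things: the filter part $\zeta_k$ of $x_k$ must hold true past pairs $(y_t,\nabla f(y_t))$, and $\zeta_*$ must be the constant lifting of $(y_*,u_*)$. But $\zeta$ sits inside $\eta$, and the $\mathbf P$-projection shifts all of $\eta$ by $-\mathbf P_{r_{+}r_{+}}^{-1}\mathbf P_{1r_{+}}^\top(y_{k+r}-y_{k+r-\frac12})$. Only the first $(n-r)d$ rows of this correction form $\chi$. The remaining $2\ell d$ rows act on the filter and have no reason to vanish.
\begin{itemize}
\item Along the trajectory, after any active projection $\zeta_{k+1}$ is no longer the lifted history. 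Consistent initialization does not help, because the perturbation is reinjected at every active step.
\item At the reference point, whenever $r_*=y_*-y_{*+\frac12}\neq 0$ (the genuinely constrained case), the filter fixed-point equation reads $\zeta_*=A_\zeta\zeta_*+B_\zeta^y y_*+B_\zeta^u u_*+c_*$. Here $c_*$ is the filter rows of $-\mathbf P_{r_{+}r_{+}}^{-1}\mathbf P_{1r_{+}}^\top r_*$. So $\zeta_*$ differs from the constant lifting by $(I-A_\zeta)^{-1}c_*$, and $\bar z\neq z_*$. Your claim that $\zeta_*$ is fixed by ``the unconstrained $\eta$-update'' fails exactly in the case of interest.
\end{itemize}
With non-gradient entries in the lifted window, the form built from $Q,\tilde Q\in\mathcal Q$ is indefinite, so $V(x_{k+1})\le\rho^2V(x_k)$ is not established.

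\textbf{The set $\mathcal D$ for $r\ge 2$.} Read literally, the linear constraint in $\mathcal Y$ is $\mathbf y=\mathbf A_{r1}y+\mathbf A_{rr}\mathbf y$. This forces $\mathbf y=\mathbf 1\otimes y$ for \emph{every} point of $\mathcal D$. That gives you $\mathbf y_*=\mathbf 1\otimes y_*$, but it equally forces $\mathbf y_{k+1}=\mathbf 1\otimes y_{k+r}$, which contradicts the trivial shift you rely on. The explicit update (with $\mathbf S$ and $\mathbf y_{k+1}=\mathbf y_{k+\frac12}$) is instead the $\mathbf P$-projection onto the iterate-dependent set $\{y\in\Omega,\ \mathbf y=\mathbf y_{k+\frac12}\}$. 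Then $x_{k+1}$ and $x_*$ are projections onto \emph{different} sets. A direct computation with firm non-expansiveness of $\Pi^{\mathbf S}_\Omega$ gives only
\[
\|x_{k+1}-x_*\|_{\mathbf P}^2\le\|x_{k+\frac12}-x_{*+\frac12}\|_{\mathbf P}^2-\|w\|_{\mathbf S}^2+2\Delta_{\mathbf y}^\top G\,w ,
\]
where
\begin{itemize}
\item $w=(y_{k+r}-y_{k+r-\frac12})-r_*$,
\item $\Delta_{\mathbf y}=\mathbf y_{k+\frac12}-\mathbf 1\otimes y_*$,
\item $G=\mathbf P_{r1}-\mathbf P_{rr_{+}}\mathbf P_{r_{+}r_{+}}^{-1}\mathbf P_{r_{+}1}$.
\end{itemize}
The last term has no sign, so non-expansiveness is not available either.

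Closing the argument needs an extra ingredient that neither your proposal nor the paper's proof supplies. One example is conditions on $\mathbf P$ making $G=0$ and the filter rows of $\mathbf P_{r_{+}r_{+}}^{-1}\mathbf P_{r_{+}1}$ vanish. Another is a multiplier argument that remains valid for the corrected filter state.

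Finally, your fallback existence argument is circular. Your estimate contracts toward $x_*$, not between two arbitrary states, so Banach's fixed-point theorem does not apply.
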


\begin{proof}
Given a reference point $(x_*, u_*, z_*)$, where $u_* = \nabla f( \tilde{C} x_*) = \nabla f (y_*)$, and $z_* = (x_*, \ldots, x_*, u_*, \ldots u_*)$,
we have 
\[x_{* + \frac12} = \mathbf{A} x_* + \mathbf{B} u_*, \quad z_* = \mathbf{C} x_* + \mathbf{D} u_*.\]
Then, by Assumption~\ref{assumption 1}, the Lyapunov inequality for the unconstrained half-step dynamics gives
\[
\| x_{k + \frac12 }  - x_{*+\frac12} \|_{\mathbf{P}}^{2} 
\leq
\rho^2 \| x_{k }  - x_{*} \|_{\mathbf{P}}^{2}.
\]
By the non-expansiveness of the projection on the $\mathbf{P}$ norm,
\begin{align*}
    & \| x_{k+1}-  x_*\|_{\mathbf{P}}^{2}
    =
\|\Pi_{\mathcal{D}}^{\mathbf{P}}( x_{k+\frac12} ) -\Pi_{\mathcal{D}}^{\mathbf{P}}( x_{*+\frac12})\|_{\mathbf{P}}^{2}\\
\le &
\| x_{k+\frac12}- x_{*+\frac12}\|_{\mathbf{P}}^{2}
\leq \rho^2 \| x_{k}- x_{*}\|_{\mathbf{P}}^{2}\\
\leq & \ldots \leq \rho^{2( k + 1 ) } \| x_{0}- x_{*}\|_{\mathbf{P}}^{2}
\end{align*}
Consequently, 
\[
\|x_k-x_*\|_2
\le
\sqrt{\operatorname{cond} (\mathbf{P})}\,
\rho^k
\|x_0-x_*\|_2
\]
where $\operatorname{cond} (\mathbf{P})$ is the condition number of $\mathbf{P}$.
Therefore, Algorithm~\eqref{alg} converges linearly to $x_*$ with rate $\rho$. Since this linear convergence is global, we have that the fixed point pair $(x_*, x_{*+\frac12} )$ to the projected dynamics $x_{*} = \Pi_{\mathcal{D}}^{\mathbf{P}} \left(x_{*+\frac12} \right)$, $x_{* + \frac12} = \mathbf{A} x_* + \mathbf{B} \nabla f (y_*)$, is unique, and its partial state $y_*$ is the optimal solution to the constrained problem \eqref{eq: constrained convex optimization problem} by Theorem~\ref{thm: fixed point gives optimum}.
\end{proof}


\section{Numerical Example}\label{Numerical Example}
\subsection{Projected Algorithm with One-step Delay}
Consider the problem of solving $\min_{y} f(y)$ in a gradient feedback channel with one-step delay.
It has been shown in \cite{scherer2023optimization,scherer2025tutorial} that the gradient descent method $x_{k+1} = x_k - \alpha \nabla f  (x_{k -1 })$ fails to converge under delays.
Let us set $L = 10$, $m = 1$, and use the software developed by
\cite{Scherer2023AlgorithmSynthesis} to synthesize a first-order algorithm, given by
\begin{align*}
G (z) = 
\frac{-0.152 (z - 0.342) (z+8.749 \times 10^{-5})}{z (z-0.580) (z-1) (z+0.896) }.
\end{align*}
The above can be rewritten into the canonical state-space form with the state $x_k = (y_{k + 1}, y_{k}, \xi^{(1)}_{k}, \xi^{(2)}_{k})$: 
\begin{align}\label{eq: unconstrained alg example}
\begin{aligned}
\begin{bmatrix}
    y_{k + 2} \\ y_{k+1} \\ \xi^{(1)}_{k+1} \\ \xi^{(2)}_{k+1} 
\end{bmatrix}
= &
\begin{bmatrix}
    0.342 &  2.297  & 0.204  &  -0.157\\
    1 & 0 & 0 & 0\\
    -6.583 & -17.788 & -2.044 &  1.571\\
   0  & -24.838  &  -3.104  &  2.386
\end{bmatrix}
\begin{bmatrix}
    y_{k + 1} \\ y_{k} \\ \xi^{(1)}_{k} \\ \xi^{(2)}_{k} 
\end{bmatrix}\\
& + 
\begin{bmatrix}
    -0.1519 \\ 0 \\ 0 \\ 0
\end{bmatrix}
u_{k}\\
:= & 
\begin{bmatrix}
    A_{11} & A_{12} & A_{13} \\
    A_{21} & A_{22} & A_{23} \\
    A_{31} & A_{32} & A_{33}
\end{bmatrix}
\begin{bmatrix}
    y_{k+1} \\ {y}_k \\ \xi_k
\end{bmatrix}
 + \begin{bmatrix}
     B_{1} \\ 0 \\ 0
 \end{bmatrix} u_k\\
y_{k} = & 
\begin{bmatrix}
    0  & 1    &   0  &    0
\end{bmatrix}
x_k
:= C x_k, ~
u_{k} = \nabla f(y_k).
\end{aligned}
\end{align}
Here, $C A^{r- 1} B = B_1 = -0.152 < 0$, since the relative degree $r = 2$.
We solve the LMI on MATLAB using YALMIP with solver MOSEK. With a lifted dimension $\ell = 9$, we carry out a bisection search and obtain the rate $\rho = 0.827$, which is the same as \cite{scherer2025tutorial,Scherer2023AlgorithmSynthesis}.
The first two elements of $\mathbf{P}_{33}^{-1} \mathbf{P}_{13}^{\top}$ are given by 
\[\chi = 
\begin{bmatrix}
     -0.029  \\ 0.036
\end{bmatrix}.
\]
Then the projected algorithm for problem \eqref{eq: constrained convex optimization problem} is constructed as
\begin{subequations}\label{eq:alg example}
\begin{align}
 & y_{k + 1 + \frac12} =  A_{11} y_{k+1} + A_{12} {y}_{k} + A_{13} \xi_{k}+ B_1 \nabla f(C x_k) \\
 & y_{k + 2} = \Pi_{\Omega}  \left(  y_{k + 1 + \frac12} \right)\\
& \xi_{k+1} = A_{31} y_{k+1} + A_{32} y_{k} + A_{33} \xi_{k} -
\chi
\left( y_{k + 2} - y_{k + 1 + \frac12}  \right)
\end{align}
\end{subequations}
By Theorem~\ref{thm: same_rate}, the projected \eqref{eq:alg example} converges linearly to the optimal solution of the constrained problem \eqref{eq: constrained convex optimization problem} with the same rate $\rho = 0.827$, using the same parameters of \eqref{eq: unconstrained alg example}.
\subsection{Numerical Implementation}
Let us consider a numerical example with $y \in \mathbb{R}^{2}$, and
\begin{align*}
    f (y) = \frac12 y^\top F y + p^\top y, ~
    F = \begin{bmatrix}
        9.88 & -1\\ -1 & 1.117
    \end{bmatrix},~
    p = \begin{bmatrix}
        1 \\ 5
    \end{bmatrix}.
\end{align*}
It can be verified that $f \in S (1.0, 9.99)$.
The trajectory of the convergence error $\| y_k  - y_*\|_2$ for \eqref{eq: unconstrained alg example} with random initial condition $x_0 \in [0,1]^{8}$ is depicted in Fig~\ref{fig:1}.
\begin{figure}[htbp]
    \centering
    \includegraphics[width=1\linewidth]{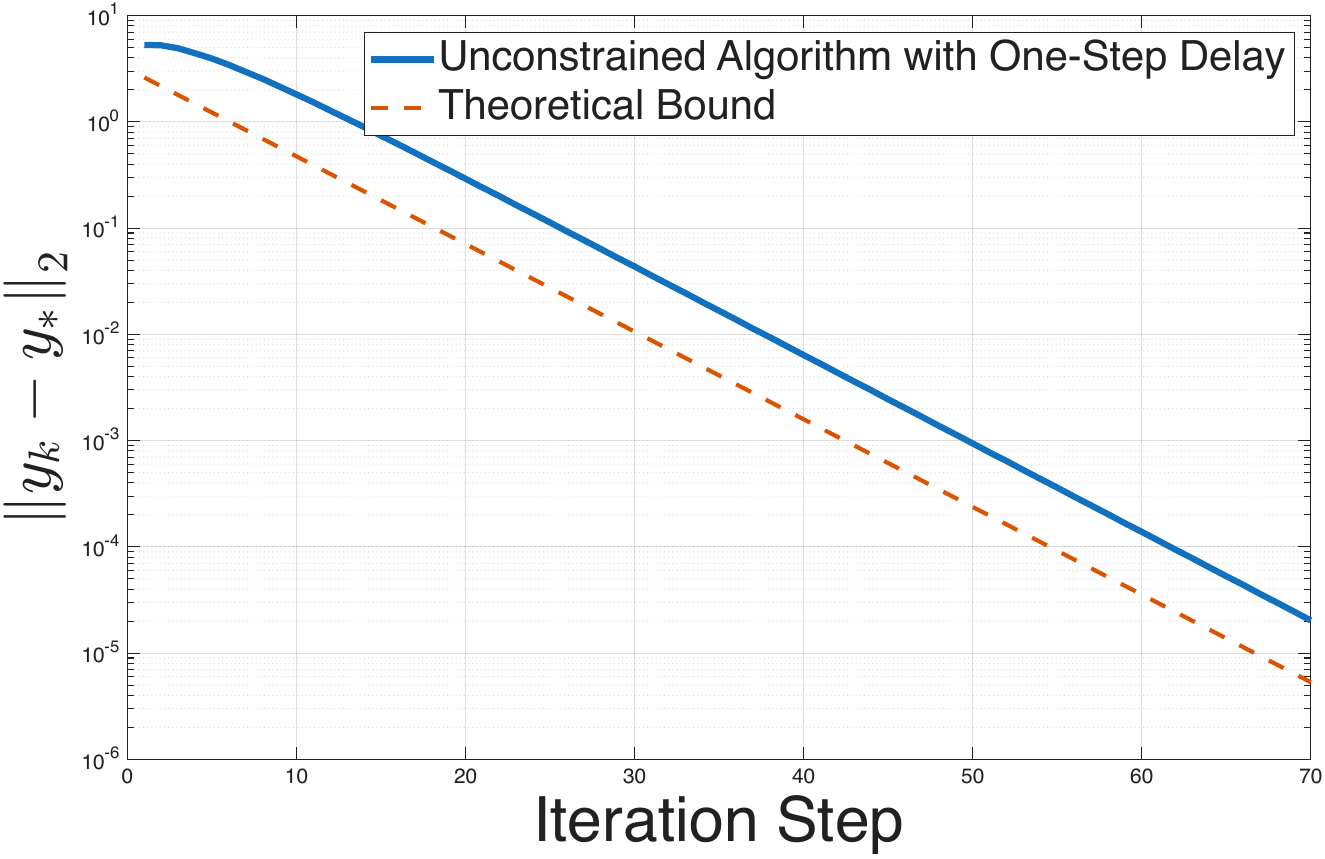}
    \caption{The trajectories of $\| y_k  - y_*\|_2$ for the unconstrained algorithm with one-step delay in \eqref{eq: unconstrained alg example}, where $y_*$ is the optimal solution to the unconstrained problem.}
    \label{fig:1}
\end{figure}
Next, we adopt an ellipse constraint set
\begin{align*}
    \Omega = \left\{ y \in \mathbb{R}^{2}: y^\top \begin{bmatrix}
        1 & -0.5\\ -0.5 & 2
    \end{bmatrix} y \leq 10 \right\}.
\end{align*}
The trajectories of the convergence error $\| y_k  - y_*\|_2$ and $|f(y_k)  - f(y_*)|$ for the projected algorithm \eqref{eq:alg example} under one-step delay feedback with random initial condition $x_0 \in [0,1]^{8}$ are depicted in Fig~\ref{fig:21} and Fig~\ref{fig:22}.
\begin{figure}[htbp]
    \centering
    \begin{subfigure}[t]{1\linewidth}
    \centering
    \includegraphics[width=1\linewidth]{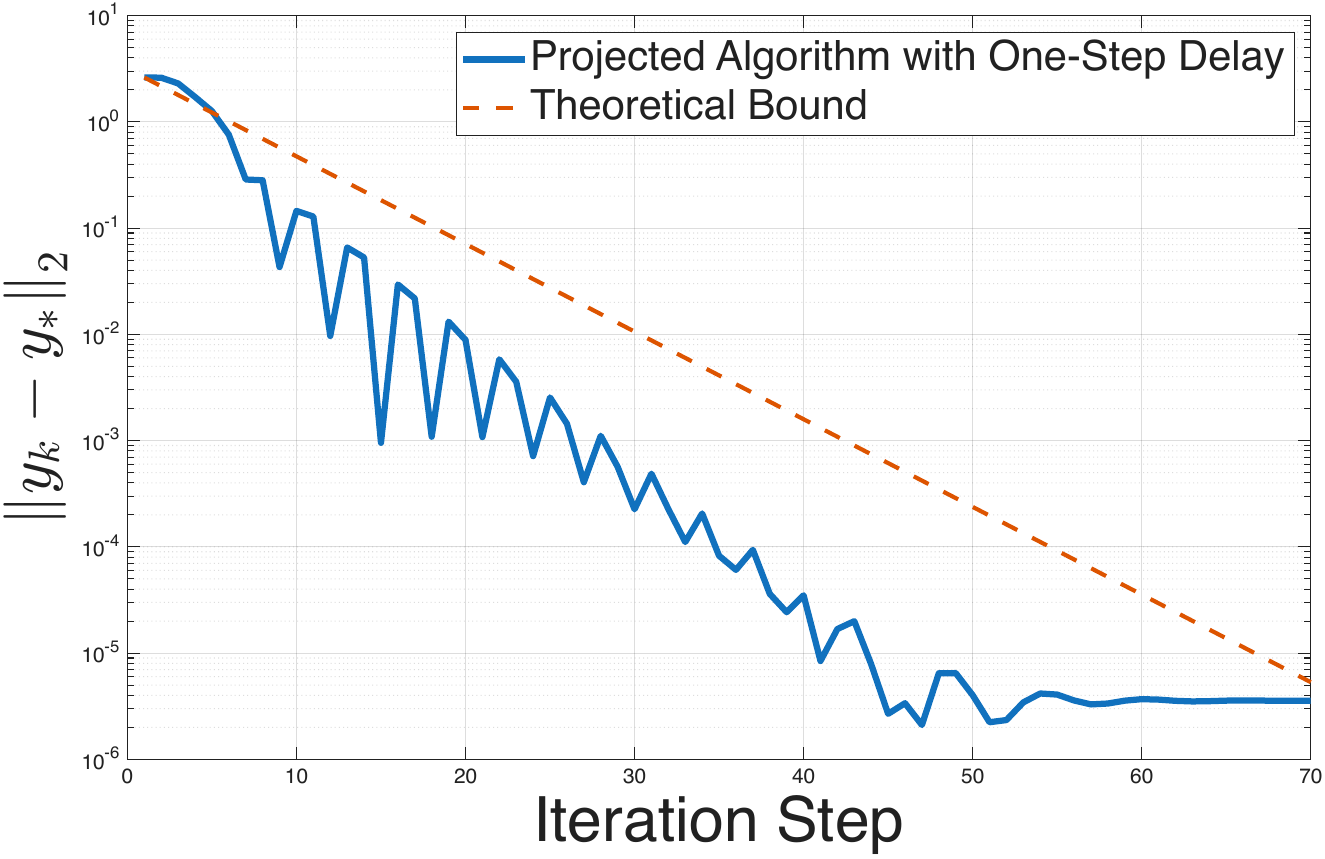}
    \caption{Trajectory of the distance $\| y_{k} - y_* \|_2$, where $y_*$ is the optimal solution to the constrained problem.}
    \label{fig:21}
\end{subfigure}
\begin{subfigure}[t]{1\linewidth}
    \centering
    \includegraphics[width=1\linewidth]{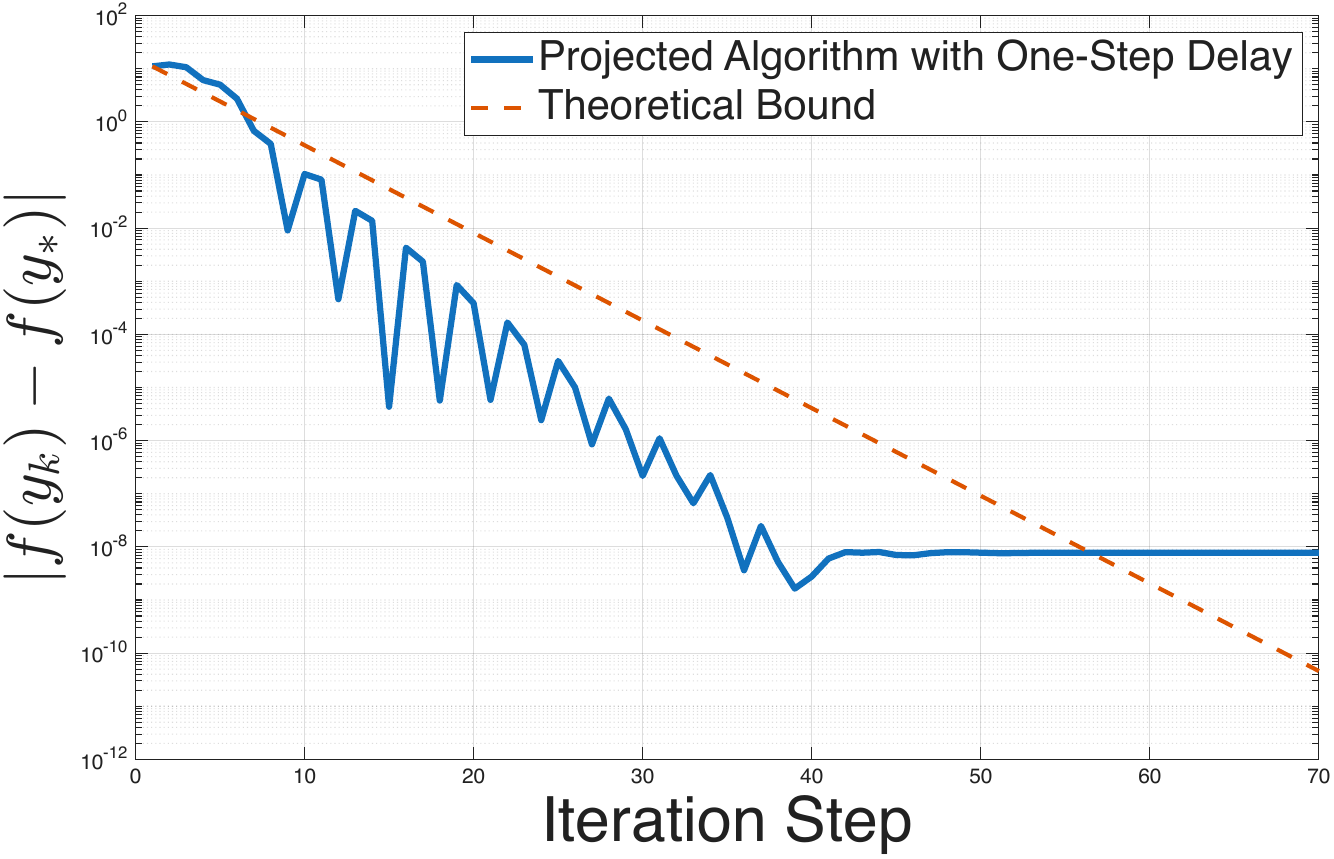}
    \caption{Trajectory of the error $| f (y_k) - f(y_*) |$.}
    \label{fig:22}
\end{subfigure}
\caption{Convergence error for the proposed projected algorithm with one-step delay in \eqref{eq:alg example}. The theoretical rate bound is given by the dotted lines.}
\label{fig:2}
\end{figure}

\section{Conclusion}\label{Conclusion}
We have proposed a canonical framework for extending first-order unconstrained algorithms in Lur’e form, with arbitrary relative degree, to projected algorithms that preserve the same convergence rate. Future work includes extending the framework to proximal algorithms. Another important direction is to investigate the sufficiency and necessity of the class of multipliers used in our method.

\section*{Acknowledgment}
M.~Li used ChatGPT to assist with coding and manuscript polishing.

\bibliographystyle{IEEEtran}
\bibliography{References}

@book{zhou1996robust,
  title={Robust and optimal control},
  author={Zhou, Kemin and Doyle, John Comstock and Glover, Keith},
  volume={40},
  year={1996},
  publisher={New Jersey: Prentice Hall}
}

@article{hu2016exponential,
  title={Exponential decay rate conditions for uncertain linear systems using integral quadratic constraints},
  author={Hu, Bin and Seiler, Peter},
  journal={IEEE Transactions on Automatic Control},
  volume={61},
  number={11},
  pages={3631--3637},
  year={2016},
  publisher={IEEE}
}

@article{megretski1997system,
  title={System analysis via integral quadratic constraints},
  author={Megretski, Alexandre and Rantzer, Anders},
  journal={IEEE Transactions on Automatic Control},
  volume={42},
  number={6},
  pages={819--830},
  year={1997},
  publisher={IEEE}
}

@book{nesterov2003introductory,
  title={Introductory lectures on convex optimization: A basic course},
  author={Nesterov, Yurii},
  volume={87},
  year={2003},
  publisher={Springer Science \& Business Media}
}

@article{li2025convergence,
  title={Convergence rate bounds for the mirror descent method: {IQCs}, {Popov} criterion and {Bregman} divergence},
  author={Li, Mengmou and Laib, Khaled and Hatanaka, Takeshi and Lestas, Ioannis},
  journal={Automatica},
  volume={171},
  pages={111973},
  year={2025},
  publisher={Elsevier}
}

@article{li2020smooth,
  title={Smooth dynamics for distributed constrained optimization with heterogeneous delays},
  author={Li, Mengmou and Yamashita, Shunya and Hatanaka, Takeshi and Chesi, Graziano},
  journal={IEEE Control Systems Letters},
  volume={4},
  number={3},
  pages={626--631},
  year={2020},
  publisher={IEEE}
}

@inproceedings{zheng2017asynchronous,
  title={Asynchronous stochastic gradient descent with delay compensation},
  author={Zheng, Shuxin and Meng, Qi and Wang, Taifeng and Chen, Wei and Yu, Nenghai and Ma, Zhi-Ming and Liu, Tie-Yan},
  booktitle={International Conference on Machine Learning},
  pages={4120--4129},
  year={2017},
  organization={PMLR}
}

@article{hatanaka2018passivity,
  title={Passivity-based distributed optimization with communication delays using PI consensus algorithm},
  author={Hatanaka, Takeshi and Chopra, Nikhil and Ishizaki, Takayuki and Li, Na},
  journal={IEEE Transactions on Automatic Control},
  volume={63},
  number={12},
  pages={4421--4428},
  year={2018},
  publisher={IEEE}
}

@article{fetzer2017absolute,
  title={Absolute stability analysis of discrete time feedback interconnections},
  author={Fetzer, Matthias and Scherer, Carsten W},
  journal={IFAC-PapersOnLine},
  volume={50},
  number={1},
  pages={8447--8453},
  year={2017},
  publisher={Elsevier}
}

@inproceedings{scherer2023optimization,
  title={Optimization algorithm synthesis based on integral quadratic constraints: A tutorial},
  author={Scherer, Carsten W and Ebenbauer, Christian and Holicki, Tobias},
  booktitle={2023 62nd IEEE Conference on Decision and Control (CDC)},
  pages={2995--3002},
  year={2023},
  organization={IEEE}
}

@article{scherer2021convex,
  title={Convex synthesis of accelerated gradient algorithms},
  author={Scherer, Carsten and Ebenbauer, Christian},
  journal={SIAM Journal on Control and Optimization},
  volume={59},
  number={6},
  pages={4615--4645},
  year={2021},
  publisher={SIAM}
}

@article{lessard2016analysis,
  title={Analysis and design of optimization algorithms via integral quadratic constraints},
  author={Lessard, Laurent and Recht, Benjamin and Packard, Andrew},
  journal={SIAM Journal on Optimization},
  volume={26},
  number={1},
  pages={57--95},
  year={2016},
  publisher={SIAM}
}

@article{lessard2022analysis,
  title={The analysis of optimization algorithms: A dissipativity approach},
  author={Lessard, Laurent},
  journal={IEEE Control Systems Magazine},
  volume={42},
  number={3},
  pages={58--72},
  year={2022},
  publisher={IEEE}
}

@book{boyd2004convex,
  title={Convex optimization},
  author={Boyd, Stephen and Vandenberghe, Lieven},
  year={2004},
  publisher={Cambridge University Press}
}

@book{ruszczynski2011nonlinear,
  title={Nonlinear Optimization},
  author={Ruszczynski, Andrzej},
  year={2011},
  publisher={Princeton University Press}
}

@article{choi2024non,
  title={Non-ergodic linear convergence property of the delayed gradient descent under the strongly convexity and the Polyak--{\L}ojasiewicz condition},
  author={Choi, Hyung Jun and Choi, Woocheol and Seok, Jinmyoung},
  journal={Analysis and Applications},
  volume={22},
  number={06},
  pages={1023--1051},
  year={2024},
  publisher={World Scientific}
}

@article{van2017fastest,
  title={The fastest known globally convergent first-order method for minimizing strongly convex functions},
  author={Van Scoy, Bryan and Freeman, Randy A and Lynch, Kevin M},
  journal={IEEE Control Systems Letters},
  volume={2},
  number={1},
  pages={49--54},
  year={2017},
  publisher={IEEE}
}

@article{boczar2017exponential,
  title={Exponential stability analysis via integral quadratic constraints},
  author={Boczar, Ross and Lessard, Laurent and Packard, Andrew and Recht, Benjamin},
  journal={arXiv preprint arXiv:1706.01337},
  year={2017}
}

@inproceedings{li2024generalization,
  title={On the Generalization of the Multivariable {Popov} Criterion for Slope-Restricted Nonlinearities},
  author={Li, Mengmou and Hatanaka, Takeshi and Nagahara, Masaaki},
  booktitle={2024 63rd IEEE Conference on Decision and Control (CDC)},
year = {2024},
note = {to appear.}
}

@book{bauschke2017convex,
  title={Convex Analysis and Monotone Operator Theory in Hilbert Spaces},
  author={Bauschke, Heinz H and Combettes, Patrick L},
  year={2017},
  publisher={Springer}
}

@article{carrasco2013equivalence,
  title={Equivalence between classes of multipliers for slope-restricted nonlinearities},
  author={Carrasco, Joaquin and Heath, William P and Lanzon, Alexander},
  journal={Automatica},
  volume={49},
  number={6},
  pages={1732--1740},
  year={2013},
  publisher={Elsevier}
}

@article{gyotoku2024dual,
  title={On Dual of {LMIs} for Absolute Stability Analysis of Nonlinear Feedback Systems with Static {O'Shea--Zames--Falb} Multipliers},
  author={Gyotoku, Hibiki and Yuno, Tsuyoshi and Ebihara, Yoshio and Magron, Victor and Peaucelle, Dimitri and Tarbouriech, Sophie},
  journal={arXiv preprint arXiv:2411.14339},
  year={2024}
}

@article{su2023necessity,
  title={On the necessity and sufficiency of discrete-time {O’Shea--Zames--Falb} multipliers},
  author={Su, Lanlan and Seiler, Peter and Carrasco, Joaquin and Khong, Sei Zhen},
  journal={Automatica},
  volume={150},
  pages={110872},
  year={2023},
  publisher={Elsevier}
}

@article{seiler2014stability,
  title={Stability analysis with dissipation inequalities and integral quadratic constraints},
  author={Seiler, Peter},
  journal={IEEE Transactions on Automatic Control},
  volume={60},
  number={6},
  pages={1704--1709},
  year={2014},
  publisher={IEEE}
}

@article{carrasco2016zames, 
title={{Zames--Falb} multipliers for absolute stability: From {O'Shea's} contribution to convex searches}, 
author={Carrasco, Joaquin and Turner, Matthew C and Heath, William P}, 
journal={European Journal of Control}, 
volume={28}, 
pages={1--19}, 
year={2016}, 
publisher={Elsevier}
}

@inproceedings{taylor2017performance,
  title={Performance estimation toolbox ({PESTO}): Automated worst-case analysis of first-order optimization methods},
  author={Taylor, Adrien B and Hendrickx, Julien M and Glineur, Fran{\c{c}}ois},
  booktitle={2017 IEEE 56th Annual Conference on Decision and Control (CDC)},
  pages={1278--1283},
  year={2017},
  organization={IEEE}
}

@article{taylor2017exact,
  title={Exact worst-case performance of first-order methods for composite convex optimization},
  author={Taylor, Adrien B and Hendrickx, Julien M and Glineur, Fran{\c{c}}ois},
  journal={SIAM Journal on Optimization},
  volume={27},
  number={3},
  pages={1283--1313},
  year={2017},
  publisher={SIAM}
}

@article{taylor2017smooth,
  title={Smooth strongly convex interpolation and exact worst-case performance of first-order methods},
  author={Taylor, Adrien B and Hendrickx, Julien M and Glineur, Fran{\c{c}}ois},
  journal={Mathematical Programming},
  volume={161},
  pages={307--345},
  year={2017},
  publisher={Springer}
}

@article{li2025first,
  title={First-Order Projected Algorithms With the Same Linear Convergence Rate Bounds as Their Unconstrained Counterparts},
  author={Li, Mengmou and Lestas, Ioannis and Nagahara, Masaaki},
  journal={arXiv preprint arXiv:2503.13965},
  year={2025}
}

@article{scherer2025tutorial,
  title={A Tutorial on Convex Design of Optimization Algorithms by Integral Quadratic Constraints},
  author={Scherer, Carsten W and Ebenbauer, Christian},
  journal={Annual Review of Control, Robotics, and Autonomous Systems},
  volume={9},
  year={2025},
  publisher={Annual Reviews}
}

@inproceedings{van2022absolute,
  title={Absolute stability via lifting and interpolation},
  author={Van Scoy, Bryan and Lessard, Laurent},
  booktitle={2022 IEEE 61st Conference on Decision and Control (CDC)},
  pages={6217--6223},
  year={2022},
  organization={IEEE}
}

@article{carrasco2019convex,
  title={Convex searches for discrete-time {Zames--Falb} multipliers},
  author={Carrasco, Joaquin and Heath, William P and Zhang, Jingfan and Ahmad, Nur Syazreen and Wang, Shuai},
  journal={IEEE Transactions on Automatic Control},
  volume={65},
  number={11},
  pages={4538--4553},
  year={2019},
  publisher={IEEE}
}

@article{li2023distributed,
  title={Distributed optimal secondary frequency control in power networks with delay independent stability},
  author={Li, Mengmou and Watson, Jeremy and Lestas, Ioannis},
  journal={IEEE Transactions on Automatic Control},
  volume={69},
  number={6},
  pages={3748--3763},
  year={2023},
  publisher={IEEE}
}

@article{kulkarni2002all,
  title={All multipliers for repeated monotone nonlinearities},
  author={Kulkarni, Vishwesh V and Safonov, Michael G},
  journal={IEEE Transactions on Automatic Control},
  volume={47},
  number={7},
  pages={1209--1212},
  year={2002},
  publisher={IEEE}
}

@misc{Scherer2023AlgorithmSynthesis,
  author       = {Scherer, Carsten W.},
  title        = {{CarstenScherer/Algorithm-Synthesis}},
  year         = {2023},
  month        = sep,
  version      = {v2},
  howpublished = {\url{https://zenodo.org/badge/latestdoi/691960972}},
  note         = {Software, released 2023-09-15}
}

@article{molzahn2017survey,
  title={A survey of distributed optimization and control algorithms for electric power systems},
  author={Molzahn, Daniel K and D{\"o}rfler, Florian and Sandberg, Henrik and Low, Steven H and Chakrabarti, Sambuddha and Baldick, Ross and Lavaei, Javad},
  journal={IEEE Transactions on Smart Grid},
  volume={8},
  number={6},
  pages={2941--2962},
  year={2017},
  publisher={IEEE}
}

@article{gower2020variance,
  title={Variance-reduced methods for machine learning},
  author={Gower, Robert M and Schmidt, Mark and Bach, Francis and Richt{\'a}rik, Peter},
  journal={Proceedings of the IEEE},
  volume={108},
  number={11},
  pages={1968--1983},
  year={2020},
  publisher={IEEE}
}

@article{li2020accelerated,
  title={Accelerated first-order optimization algorithms for machine learning},
  author={Li, Huan and Fang, Cong and Lin, Zhouchen},
  journal={Proceedings of the IEEE},
  volume={108},
  number={11},
  pages={2067--2082},
  year={2020},
  publisher={IEEE}
}

@article{zhang2022zames,
  title={{Zames--Falb} multipliers for convergence rate: motivating example and convex searches},
  author={Zhang, Jingfan and Seiler, Peter and Carrasco, Joaquin},
  journal={International Journal of Control},
  volume={95},
  number={3},
  pages={821--829},
  year={2022},
  publisher={Taylor \& Francis}
}

@article{d2021acceleration,
  title={Acceleration methods},
  author={d’Aspremont, Alexandre and Scieur, Damien and Taylor, Adrien},
  journal={Foundations and Trends in Optimization},
  volume={5},
  number={1-2},
  pages={1--245},
  year={2021},
  publisher={Emerald Publishing Limited}
}

@article{miller2026structure,
  title={Structure, Analysis, and Synthesis of First-Order Algorithms},
  author={Miller, Jared and Scherer, Carsten and Jakob, Fabian and Iannelli, Andrea},
  journal={arXiv preprint arXiv:2603.24795},
  year={2026}
}

\end{document}